\numberwithin{equation}{section}
\begin{document}
\newtheorem{theorem}{Theorem}[section]
\newtheorem{corollary}[theorem]{Conjecture}
\newtheorem{lemma}[theorem]{Lemma}
\newtheorem{remark}[theorem]{Remark}
\newtheorem{proposition}[theorem]{Proposition}

\pagestyle{fancy}                   
\fancyhead[RO]{\thepage}
\fancyhead[CO]{ }
\fancyhead[LO]{The fourth moment of holomorphic Hecke cusp forms in shorter intervals}

\fancyhead[RE]{Jinghai Liu}
\fancyhead[CE]{ }
\fancyhead[LE]{\thepage}
\cfoot{ }                                
\rfoot{ }%
\lfoot{ }
\renewcommand{\headrulewidth}{0.1mm} 
\renewcommand{\footrulewidth}{0mm} 
\renewcommand{\headwidth}{\textwidth}

\renewcommand{\thefootnote}{}

\title{The fourth moment of holomorphic Hecke cusp forms in shorter intervals}
\author{Jinghai Liu}
\date{December 2024}

\begin{abstract}
Let $0<c\le 1/4$ be fixed. For $H = K^{\frac{3}{4}+ c}$, we find the average value of the fourth moment of holomorphic Hecke cusp forms of weight varies within $[K,K+H]$, improving a previous result of Khan.
\end{abstract}
\maketitle

\section{Introduction}
\makeatletter\def\Hy@Warning#1{}\makeatother
\footnotetext{The author was partially supported by the National Key R\&D Program of China (No.
2021YFA1000700).}

A central problem in the area of quantum chaos is to understand the behavior of eigenfunctions.
An important example is the case of Maass cusp forms of large Laplace eigenvalue for the space SL${}_2(\mathbb{Z})\backslash \mathbb{H}$.
Let $\phi$ denote such a Maass form, let $\lambda$ denote its Laplace eigenvalue, and let $\phi$ be normalized so that $\int_{{\rm SL}_2(\mathbb{Z}) \backslash \mathbb{H}} |\phi(z)|^2\frac{dxdy}{y^2}=1$.
The measure $\mu_\phi$ is defined by $\mu_\phi = |\phi(z)|^2\frac{dxdy}{y^2}$.
Zelditch \cite{21} has shown that as $\lambda \to \infty$, for a \emph{typical} Maass form $\phi$ the measure $\mu_\phi$ approaches the uniform distribution measure $\frac{3}{\pi}\frac{dxdy}{y^2}$.
This is referred to as ``Quantum Ergodicity''.
Rudnick and Sarnak \cite{31} conjectured that for \emph{every} Maass form $\phi$, the measure $\mu_\phi$ approaches the uniform distribution measure, which is referred to as ``Quantum Unique Ergodicity''.
Lindenstrauss \cite{100} established the conjecture except for the possibility of mass escaping to the cusp which was eliminated later by Soundararajan \cite{30}.
Holowinsky and Soundararajan \cite{3} gave a proof of a holomorphic analog of the quantum unique ergodicity conjecture.
A more general question is the distribution of Hecke--Maass forms for SL${}_2(\mathbb{Z})$.
The random wave conjecture (RWC) introduced by Berry \cite{101}, predicts that for large Laplace eigenvalue, the distribution is close to be random.
One way to formulate this is to study the moments of Hecke--Maass cusp forms.
The fourth moment is of particular interest because of its relation to $L$-functions via Watson's formula (\cite[Theorem 3]{8}).
By assuming the generalized Lindel${\rm \ddot{o}}$f hypothesis, Buttcane and Khan \cite{29} computed the asymptotic for the fourth moment of Hecke--Maass cusp forms of large Laplace eigenvalue for SL${}_2(\mathbb{Z})$, and confirmed a result predicted by RWC.
\par In this paper, we consider the analogous case for holomorphic cusp forms.
Let $f$ be a holomorphic Hecke cusp form of weight $k$ for SL${}_2(\mathbb{Z})$.
For $z = x+iy$ in the upper half plane $\mathbb{H}$, define the rescaled function $F(z) = y^{k/2}f(z)$.
For $1\le p <\infty$, the $L^p$-norm of $F$ is defined to be
\begin{equation}
||F||_p = \left( \int_{{\rm SL}_2(\mathbb{Z}) \backslash \mathbb{H}} |F(z)|^p \frac{dxdy}{y^2} \right)^{1/p}.\label{2eq1.1}
\end{equation}
\par The $L^4$-norm was studied by Blomer, Khan, and Young in \cite{1}. They obtained the first non-trivial upper bound for $||F||_4$, proving with the normalization $||F||_2 = 1$ that as the weight $k\to \infty$, for any $\varepsilon >0$,
\begin{equation}
||F||_4^4\ll_{\varepsilon}k^{1/3+\varepsilon}. \label{2eq1.2}
\end{equation}
However, this seems to be very far from the truth.
The following conjecture was made in the same paper.
\begin{corollary}[\cite{1},Conjecture 1.2]\label{2thm1.1}
With the normalization
\begin{equation}
\frac{3}{\pi}||F||_2^2 = 1, \label{2eq1.4}
\end{equation}
we have
\begin{equation}
\frac{3}{\pi}||F||_4^4 \sim 2 \label{2eq1.5}
\end{equation}
as $k\to \infty$.
\end{corollary}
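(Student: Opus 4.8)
The plan is to attack the conjecture through the spectral expansion of $|F|^2$ combined with Watson's formula, which converts the fourth moment into a moment of central $L$-values. Decomposing the $L^2$-function $|F|^2$ along the spectral decomposition of $L^2(\mathrm{SL}_2(\mathbb{Z})\backslash\mathbb{H})$ --- the constants, the Hecke--Maass cusp forms $\{u_j\}$ with spectral parameters $t_j$, and the Eisenstein series $E(\cdot,\tfrac12+it)$ --- and applying Parseval's identity gives
\[
\|F\|_4^4 \;=\; \frac{|\langle |F|^2,1\rangle|^2}{\mathrm{vol}} \;+\; \sum_j |\langle |F|^2,u_j\rangle|^2 \;+\; \frac{1}{4\pi}\int_{-\infty}^\infty |\langle |F|^2,E(\cdot,\tfrac12+it)\rangle|^2\,dt .
\]
With the normalization $\tfrac{3}{\pi}\|F\|_2^2=1$ and $\mathrm{vol}=\pi/3$, one has $\langle |F|^2,1\rangle=\|F\|_2^2=\pi/3$, so the first term contributes exactly $1$ to $\tfrac{3}{\pi}\|F\|_4^4$. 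Hence the conjecture is equivalent to proving that the discrete and continuous spectral contributions together tend to $1$ as $k\to\infty$; the predicted value $2$ is precisely the fourth-moment constant of a \emph{complex} Gaussian wave (whereas the real case gives $3$, matching the Hecke--Maass computation of Buttcane and Khan \cite{29}), in line with the random wave heuristic discussed above.

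Next I would invoke Watson's formula \cite[Theorem 3]{8} to turn each inner product into a central value. Since $f\times f$ decomposes as $\mathrm{sym}^2 f\oplus 1$, this yields
\[
|\langle |F|^2,u_j\rangle|^2 \;\asymp\; \frac{L(\tfrac12,u_j)\,L(\tfrac12,\mathrm{sym}^2 f\times u_j)}{L(1,\mathrm{sym}^2 u_j)}\,\mathcal{H}(t_j,k),
\]
together with the analogous identity for the Eisenstein terms, in which $L(\tfrac12,u_j)L(\tfrac12,\mathrm{sym}^2 f\times u_j)$ is replaced by $|\zeta(\tfrac12+it)|^2\,|L(\tfrac12+it,\mathrm{sym}^2 f)|^2$. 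The archimedean factor $\mathcal{H}(t,k)$ arises from the triple-product gamma integral of the weight-$k$ data, and a stationary-phase/Bessel analysis should show that $\mathcal{H}(t,k)$ is essentially flat on $0\le t\le 2k$ and decays rapidly beyond, so that the effective spectral sum has length $t_j\lesssim 2k$ (and, by the Weyl law, size $\asymp k^2$).

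The problem then reduces to evaluating, for a \emph{fixed} $f$, the first moment
\[
\sum_{t_j\lesssim 2k} \frac{L(\tfrac12,u_j)\,L(\tfrac12,\mathrm{sym}^2 f\times u_j)}{L(1,\mathrm{sym}^2 u_j)}\,\mathcal{H}(t_j,k)
\;+\;(\text{continuous analog}),
\]
and showing that the total converges to $1$. I would open each central value by its approximate functional equation, unfold the Hecke eigenvalues, and apply the Kuznetsov formula to the $u_j$-sum (with the explicit Eisenstein expansion handling the integral); the diagonal terms should produce the main term $1$, and the off-diagonal Kloosterman terms should cancel. Here lies the main obstacle. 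The factor $\mathrm{sym}^2 f\times u_j$ is a degree-six $L$-function whose conductor is a large power of $k$, while the spectral sum has length only $\asymp k^2$, so the first moment sits at the very edge of what is accessible and each central value is individually controlled only by convexity. For a single $f$ there is no auxiliary averaging to tame the off-diagonal, which carries the divisor-type coefficients of $\mathrm{sym}^2 f$; controlling it appears to require the generalized Lindel\"of hypothesis (as in the conditional treatment of \cite{29}), strong subconvex bounds for this degree-six family, or --- the route taken for the unconditional results of the present paper --- an additional average of $f$ over the weight $k\in[K,K+H]$, which supplies the missing cancellation but resolves only the weight-averaged statement rather than the pointwise limit asserted in the conjecture.
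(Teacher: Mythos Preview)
The statement you are attempting to prove is recorded in the paper as a \emph{conjecture} (note that the environment labeled \texttt{corollary} is declared with display name ``Conjecture''), quoted from \cite{1}. The paper does not prove it; the unconditional result actually established is Theorem~\ref{thm1.3}, the weight-averaged version over $k\in[K,K+H]$. You acknowledge this in your final paragraph, so what you have written is not a proof but an honest outline of the obstruction. There is no proof in the paper to compare yours against.

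That said, two substantive remarks. First, your spectral decomposition is \emph{not} the one the paper uses for the averaged result. You expand $|F|^2\in L^2(\mathrm{SL}_2(\mathbb{Z})\backslash\mathbb{H})$ against the constant, Hecke--Maass forms $u_j$, and Eisenstein series, which leads via Watson to $L(\tfrac12,u_j)L(\tfrac12,\mathrm{sym}^2 f\times u_j)$ summed over the Maass spectrum of length $\asymp k^2$, plus a continuous term. The paper instead observes (equation~\eqref{2eq1.21}) that $f^2$ is a holomorphic cusp form of weight $2k$, so one can expand $F^2$ over the \emph{finite} basis $B_{2k}$: $\|F\|_4^4=\sum_{g\in B_{2k}}|\langle F^2,G\rangle|^2$, and Watson converts this to a sum of $L(\tfrac12,g)L(\tfrac12,\mathrm{sym}^2 f\times g)$ over holomorphic $g$ of weight $2k$. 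This holomorphic route has the technical advantage of involving only a single finite sum (no Eisenstein integral, no infinite spectral tail), and it feeds directly into the Petersson formula rather than Kuznetsov. Both approaches face the same fundamental barrier for fixed $f$---a degree-$6$ $L$-function on the edge of the convexity range---but the holomorphic expansion is the one that matches the averaging machinery the paper actually deploys.

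Second, your claim that the ``diagonal terms should produce the main term $1$'' is plausible heuristics but not justified in your outline; in the paper's setup the analogous diagonal computation (equations \eqref{2eq2.3}--\eqref{2eq2.5}) is a genuine calculation involving a contour shift and the Rankin--Selberg identity \eqref{2eq2.4}, and your Maass-form version would require its own analogue. In any case, since the pointwise conjecture remains open, no unconditional argument along either route is currently known.
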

\par Zenz \cite{25} proved that the fourth moment of holomorphic Hecke cusp forms is bounded by a constant provided that the generalized Riemann hypothesis holds.
Khan established Conjecture \ref{2thm1.1} on average over all $f$ of weight $k \in [K,2K]$ (cf.\cite[Theorem 1.2]{6}).
Our goal is to establish the analogue of Theorem 1.2 in \cite{6} in a small scale setting.
Let $0<c\le 1/4$ be fixed. For $H = K^{\frac{3}{4}+ c}$, with the normalization $||F||_2 = 1$, we have
\begin{theorem}\label{thm1.3}
Let $w$ be a smooth, non-negative function supported on $(0,1)$ with bounded derivatives.
Then for some $\delta$ with $0<\delta\le c$, we have
\begin{equation}
\frac{2}{HW}\sum_{k\equiv 0 \,mod\,2}w\left(\frac{k-K}{H}\right)\frac{12}{k}\sum_{f\in B_k}||F||_4^4 = \frac{6}{\pi} + O(K^{-\delta}),\label{eq1.6}
\end{equation}
where $W = \int_{-\infty}^{\infty}w(x)dx$.
\end{theorem}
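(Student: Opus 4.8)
The plan is to follow Khan's treatment of the dyadic range $[K,2K]$ in \cite{6}, the essential new point being a more delicate handling of the $k$-average, which is now over the short interval $K<k<K+H$ with $H=K^{3/4+c}$. First I would expand $\|F\|_4^4=\langle|F|^2,|F|^2\rangle$ by the spectral decomposition of $L^2(\mathrm{SL}_2(\mathbb{Z})\backslash\mathbb{H})$: letting $\{u_j\}$ be an orthonormal basis of Hecke--Maass cusp forms with spectral parameters $t_j$, Parseval's identity gives
\[
\|F\|_4^4=\frac{\|F\|_2^4}{V}+\sum_j\bigl|\langle|F|^2,u_j\rangle\bigr|^2+\frac{1}{4\pi}\int_{-\infty}^{\infty}\bigl|\langle|F|^2,E(\cdot,\tfrac12+it)\rangle\bigr|^2\,dt ,
\]
where $V=\mathrm{vol}(\mathrm{SL}_2(\mathbb{Z})\backslash\mathbb{H})=\pi/3$. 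With the normalization $\|F\|_2=1$ the first term equals $3/\pi$, one of the two halves of the main term. For the discrete part I would apply Watson's triple-product formula to write $\bigl|\langle|F|^2,u_j\rangle\bigr|^2$ as a constant times $\Lambda(\tfrac12,u_j)\Lambda(\tfrac12,\mathrm{sym}^2 f\times u_j)/\bigl(\Lambda(1,\mathrm{sym}^2 f)^2\Lambda(1,\mathrm{sym}^2 u_j)\bigr)$; a Stirling analysis of the six archimedean factors shows that the $e^{-\pi t_j}$ factors cancel between numerator and denominator, leaving a smooth archimedean weight of size $\asymp K^{-1}$ supported, up to rapid decay, on $t_j\ll K^{1/2+\varepsilon}$. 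Thus, up to a negligible error, the discrete part is $K^{-1}\sum_{t_j\ll K^{1/2+\varepsilon}}h\!\bigl(t_j/\sqrt K\bigr)\,L(\tfrac12,u_j)L(\tfrac12,\mathrm{sym}^2 f\times u_j)/\bigl(L(1,\mathrm{sym}^2 f)^2 L(1,\mathrm{sym}^2 u_j)\bigr)$ for a fixed rapidly decreasing $h$. The continuous part I expect to be the benign one: unfolding the Eisenstein series identifies $\langle|F|^2,E(\cdot,\tfrac12+it)\rangle$ with an explicit multiple of $\tfrac{\Gamma(k-\frac12+it)}{\Gamma(k)}\cdot\tfrac{\zeta(\frac12+it)L(\frac12+it,\mathrm{sym}^2 f)}{\zeta(1+2it)L(1,\mathrm{sym}^2 f)}$, and the factor $|\Gamma(k-\tfrac12+it)/\Gamma(k)|^2\ll K^{-1}$ gives enough of a head start that, after a second-moment bound for $L(\tfrac12+it,\mathrm{sym}^2 f)$ over $f\in B_k$ (standard via the Petersson formula), the continuous contribution is $O_\varepsilon(K^{-1/2+\varepsilon})$ and goes into the error term.

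The next stage is to carry out the double average $\tfrac{2}{HW}\sum_{k\equiv 0\,(2)}w(\tfrac{k-K}{H})\tfrac{12}{k}\sum_{f\in B_k}$ of these quantities. The constant $3/\pi$ reproduces itself, up to $O(1/K)$ from $\tfrac{12}{k}|B_k|=1+O(1/k)$ and the count of even $k$. For the discrete part I would insert the approximate functional equations of $L(\tfrac12,u_j)$ and $L(\tfrac12,\mathrm{sym}^2 f\times u_j)$, turning it into Dirichlet polynomials whose $f$-dependence is a product of two Hecke eigenvalues, schematically $\sum_{m,n}\frac{\lambda_f(m)\lambda_f(n)}{\sqrt{mn}}\,g_j(m,n)$ smoothly truncated to $m,n\lesssim K^{1+\varepsilon}$, with $g_j$ built from the Hecke eigenvalues of $u_j$ (equivalently, one may bypass Watson and use the regularized unfolding of \cite{1}, which reduces $\|F\|_4^4$ directly to the analytic continuation of an additive-divisor sum and hence to shifted convolutions $\sum_{n\asymp K}\lambda_f(n)\lambda_f(n+\ell)$ with $\ell\ll K^{1+\varepsilon}$). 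Now apply the Petersson trace formula in $f$. The diagonal contribution collapses the $f$-sum and a large part of the $t_j$- (or $\ell$-) sum; carrying out the remaining spectral sum --- by the Kuznetsov formula, or by the classical mean-value asymptotics for $L(\tfrac12,u_j)$-type quantities, which on the diagonal reduce to $\zeta$- and $L(1,\mathrm{sym}^2\cdot)$-arithmetic --- produces precisely the second half $3/\pi$ of the main term, together with an admissible error. Each off-diagonal term contributes $i^{-k}\sum_{c\ge1}\tfrac{S(m,n;c)}{c}J_{k-1}\!\bigl(\tfrac{4\pi\sqrt{mn}}{c}\bigr)$.

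The decisive step --- and the source of the hypothesis $H\ge K^{3/4+c}$ --- is the behaviour of the $k$-average of these off-diagonal terms. I would evaluate $\sum_{k\equiv 0\,(2)}w(\tfrac{k-K}{H})\,i^{-k}J_{k-1}(x)$ by combining the integral representation of the Bessel function with Poisson summation in $k$ (equivalently, the uniform asymptotics of $J_{k-1}(x)$: exponential decay for $x<k$, the stationary-phase form $(x^2-k^2)^{-1/4}e^{\pm i(\sqrt{x^2-k^2}-k\arccos(k/x))}$ for $x>k$, and Airy behaviour in the transition). In the oscillatory regime $x>k$ the $k$-sum is $O_A(K^{-A})$, so the sum is negligible unless $x$ lies in the short transition window $x\in[K-O(K^{1/3}),\,K+H]$, where its size is only $O(K^{\varepsilon})$; crucially, this condition together with $x=4\pi\sqrt{mn}/c$ forces $c\ll K^{\varepsilon}$ and confines $m,n$ to $mn\asymp c^2K^2$ with relative slack $\asymp H/K$, so few terms survive. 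One then bounds the Kloosterman sums by Weil's estimate $|S(m,n;c)|\le\tau(c)(m,n,c)^{1/2}c^{1/2}$ and sums over the surviving ranges of $m,n,c$, the spectral parameter $t_j$ (using the Kuznetsov formula or the large sieve), and the shift. Carefully accounting for all the savings --- the archimedean weight $K^{-1}$, the coefficient weight $(mn)^{-1/2}$, Weil's bound, the restricted $x$-window, and the $j$-sum cancellation --- yields an off-diagonal contribution of order $K^{3/4+\varepsilon}/H$, which is $O(K^{-\delta})$ for some $0<\delta\le c$ exactly because $H=K^{3/4+c}$ with $c>0$. Assembling the constant-function term $3/\pi$, the Petersson-diagonal main term $3/\pi$, the Eisenstein term (swallowed by the error), and this off-diagonal bound gives the asserted $\tfrac6\pi+O(K^{-\delta})$. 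I expect this last step to be the genuine obstacle: with the $k$-sum only of length $K^{3/4+c}$ there is almost no slack, and one must extract the full cancellation from the interplay of the Bessel $k$-sum, the oscillation $e^{\pm 4\pi i\sqrt{mn}/c}$, and Weil's bound, whereas in Khan's dyadic setting ($H\asymp K$) the same estimates have plenty of room.
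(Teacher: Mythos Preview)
Your route differs from the paper's at the very first step. You expand $|F|^2$ spectrally over Maass cusp forms $u_j$ and Eisenstein series, whereas the paper (following \cite{1} and \cite{6}) uses that $f^2$ is a holomorphic cusp form of weight $2k$, so that $\|F\|_4^4=\langle F^2,F^2\rangle=\sum_{g\in B_{2k}}|\langle F^2,G\rangle|^2$ is a \emph{finite} sum over the Hecke basis $B_{2k}$ with no continuous spectrum. Watson's formula then expresses $|\langle F^2,G\rangle|^2$ via $L(\tfrac12,g)L(\tfrac12,\mathrm{sym}^2 f\times g)$, and one applies Petersson's formula twice: first in $g\in B_{2k}$ (producing $J_{2k-1}$), then in $f\in B_k$ (producing $J_{k-1}$). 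The entire main term $6/\pi$ comes from the diagonal of the $g$-Petersson step via the Rankin--Selberg identity \eqref{2eq2.4}, not as the sum $3/\pi+3/\pi$ of a constant-function piece and a spectral-diagonal piece. The holomorphic decomposition buys a cleaner main-term extraction and keeps the whole problem inside the Petersson world; your Maass route is in principle viable but forces you through a mixed Petersson/Kuznetsov analysis and an Eisenstein contribution that the paper avoids entirely.

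This difference propagates to the error term, and here your sketch has a genuine gap. In the paper the central object is the short $k$-average of the \emph{product} $i^k\,J_{k-1}(4\pi n r_1/c_2)\,J_{2k-1}(4\pi\sqrt{nm\alpha\beta^2}/c_1)$, handled by splitting into the pieces $\mathcal{E}_1,E_1,E_2,E_3$ (Lemmas~\ref{2thm3.2}--\ref{2thm3.5}) and invoking the asymptotic expansion Lemma~\ref{2thm1.8} and the bounds of Lemmas~\ref{2thm1.9}--\ref{2thm1.10} from \cite{6}; the threshold $H\ge K^{3/4+c}$ enters precisely in $\mathcal{E}_1$, $E_1$ and $E_3$. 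Your proposal instead analyzes only the single sum $\sum_{k}i^{-k}w(\tfrac{k-K}{H})J_{k-1}(x)$ and then appeals to unspecified ``$j$-sum cancellation''. On your route the second trace formula (Kuznetsov in $t_j$) still produces its own Bessel/Kloosterman structure, and it is the \emph{interaction} of the two that carries the problem; the heuristic count $K^{3/4+\varepsilon}/H$ you give is not derived from that interaction and does not substitute for it. In short, the step you yourself flag as ``the genuine obstacle'' is exactly what the paper resolves through the two-Petersson machinery, and your proposal supplies no analogue of that analysis.
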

\begin{remark}
The limit of $H$ comes from Lemma \ref{2thm3.5} (See Remark \ref{3remark4}, Remark \ref{3remark3}, Remark \ref{3remark2}, Remark \ref{3remark1}).
\end{remark}
\par The main steps of proof are as follows.
By Watson's formula, the proof of Theorem  \ref{thm1.3} is reduced to finding the mean value of a degree $8$ $L$-function at the central point.
We have to find the triple product $L$-function value
\begin{equation}
L\left(\frac{1}{2},f\times f\times g\right) = L\left(\frac{1}{2},{\rm sym}^2 f\times g\right)L\left(\frac{1}{2},g\right)\label{2eq1.8}
\end{equation}
on average over $g\in B_{2k}$, $f\in B_k$, and $k\in [K,K+H]$.
The Petersson trace formula (Lemma \ref{2thm1.3}) leads to the task of bounding a sum like
\begin{equation}
\sum_{n,m,c_1,c_2}S(n,m;c_1)S(n^2,1;c_2)\sum_{K\le k\le K+H,\,2|k}i^kJ_{2k-1}\left(\frac{4\pi \sqrt{nm}}{c_1}\right)J_{k-1}\left(\frac{4\pi n}{c_2}\right),\label{2eq1.9}
\end{equation}
where $n$, $m$, $c_1$, $c_2$ are positive integers whose range of summation depends on $K$.
Then we develop a series of lemmas to deal with the sum of Bessel functions and use Weil's bound to deal with the sum of Kloosterman sums.

\par Throughout, $\varepsilon$ will denote an arbitrarily small positive constants, but not necessarily the same one from one occurrence to the next.
The implied constants in any estimate may depend on $\varepsilon$.
The delta symbol $\delta_P$ will equal $1$ whenever the statement $P$ is true and $0$ whenever it is false.
\section{$L$-functions}
By definition, $f\in B_k$ is an eigenform of every Hecke operator $T_n$.
Let
\begin{equation}
T_nf = a_f(n)n^{\frac{k-1}{2}}f.\label{2eq1.10}
\end{equation}
The rescaled eigenvalues $a_f(n)$ satisfy the Hecke relations
\begin{equation}
a_f(n)a_f(m) = \sum_{d|(m,n)}a_f\left(\frac{nm}{d^2}\right),\label{2eq1.11}
\end{equation}
and Deligne's bound
\begin{equation}
|a_f(n)| \le \tau(n),\label{2eq1.12}
\end{equation}
where $\tau(n)$ is the number of positive divisors of $n$.
\begin{lemma}[Petersson's trace formula]\label{2thm1.3}
\begin{equation}
\frac{2\pi^2}{k-1}\sum_{f\in B_k}\frac{a_f(n)a_f(m)}{L(1,{\rm sym}^2f)} = \delta_{n=m}+2\pi i^{-k}\sum_{c\ge 1}\frac{S(n,m;c)}{c}J_{k-1}\left(\frac{4\pi \sqrt{nm}}{c}\right),\label{2eq1.13}
\end{equation}
where $L(s,{\rm sym}^2f)$ is the symmetric-square L-function, $J_{k-1}$ is the $J$-Bessel function and
\begin{equation}
S(n,m;c) = {\sum_{b\,mod\,c}}^{*}e\left(\frac{nb+m\bar{b}}{c}\right) \label{2eq1.14}
\end{equation}
is the Kloosterman sum.
\end{lemma}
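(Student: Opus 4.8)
The plan is to establish the formula by the classical Poincaré series method, computing a single inner product in two ways. Work in the space $S_k$ of weight $k$ cusp forms on $\Gamma={\rm SL}_2(\mathbb{Z})$ with the Petersson product $\langle f,g\rangle=\int_{\Gamma\backslash\mathbb{H}}f(z)\overline{g(z)}\,y^{k}\,\frac{dxdy}{y^2}$, and for each integer $m\ge 1$ introduce the $m$-th Poincaré series
\[
P_m(z)=\sum_{\gamma\in\Gamma_\infty\backslash\Gamma}(cz+d)^{-k}\,e(m\gamma z),\qquad \gamma=\begin{pmatrix}*&*\\ c&d\end{pmatrix},
\]
where $\Gamma_\infty$ is the stabiliser of the cusp. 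For the large even weights at hand the series converges absolutely to an element of $S_k$. The one property I need is the reproducing identity obtained by unfolding against any $g\in S_k$ with Fourier expansion $g(z)=\sum_{n\ge1}\hat a_g(n)e(nz)$:
\[
\langle g,P_m\rangle=\frac{\Gamma(k-1)}{(4\pi m)^{k-1}}\,\hat a_g(m).
\]
I will compute $\langle P_n,P_m\rangle$ first geometrically, from the Fourier expansion of $P_m$, and then spectrally, over the Hecke basis $B_k$; matching the two expressions produces the trace formula.

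First I would compute the Fourier coefficients $p_m(n)$ of $P_m(z)=\sum_{n\ge1}p_m(n)e(nz)$. Splitting off the identity coset gives the diagonal contribution $\delta_{n=m}$; the remaining cosets are grouped by their lower-left entry $c\ge1$ and parametrised by residues $d\bmod c$, with the integer translations used to unfold the coefficient integral to the whole line. Writing $\gamma z=\tfrac{a}{c}-\tfrac{1}{c(cz+d)}$ and summing the resulting additive phases over residues $d\bmod c$ (with $ad\equiv1\bmod c$) assembles the Kloosterman sum $S(m,n;c)$, while the leftover line integral
\[
\int w^{-k}\,e\!\left(-\frac{m}{cw}-\frac{nw}{c}\right)dw
\]
is the standard Bessel integral evaluating to a multiple of $J_{k-1}(4\pi\sqrt{mn}/c)$. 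Collecting the Jacobian $1/c$ and the remaining normalisations gives
\[
p_m(n)=\delta_{n=m}+2\pi i^{-k}\Big(\tfrac{n}{m}\Big)^{\frac{k-1}{2}}\sum_{c\ge1}\frac{S(m,n;c)}{c}\,J_{k-1}\!\Big(\frac{4\pi\sqrt{mn}}{c}\Big).
\]
This Bruhat-type bookkeeping and the evaluation of the Bessel integral is the technical heart of the proof and the step I expect to be the main obstacle.

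For the spectral side I would expand $\langle P_n,P_m\rangle$ over the Hecke eigenbasis $B_k$, which is orthogonal but not orthonormal, so that $\langle P_n,P_m\rangle=\sum_{f\in B_k}\langle P_n,f\rangle\langle f,P_m\rangle/\langle f,f\rangle$. The reproducing identity and the reality of the Hecke eigenvalues turn each summand into $\hat a_f(n)\hat a_f(m)/\langle f,f\rangle$ times explicit Gamma factors, and the normalisation $\hat a_f(n)=a_f(n)n^{(k-1)/2}$ (with $a_f(1)=1$) introduces the rescaled eigenvalues of \eqref{2eq1.10}. To insert the symmetric-square $L$-value I would invoke the Rankin--Selberg relation: integrating $|f(z)|^2y^k$ against the real-analytic Eisenstein series, unfolding, and taking the residue at the edge of absolute convergence expresses $\langle f,f\rangle$ in terms of $L(1,{\rm sym}^2f)$, giving precisely $\langle f,f\rangle=\frac{\Gamma(k)}{2\pi^2(4\pi)^{k-1}}L(1,{\rm sym}^2f)$.

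Finally I would equate the geometric value $\langle P_n,P_m\rangle=\frac{\Gamma(k-1)}{(4\pi m)^{k-1}}p_n(m)$ with the spectral expansion and solve for $\sum_{f\in B_k} a_f(n)a_f(m)/\langle f,f\rangle$. The factors $(n/m)^{\pm(k-1)/2}$ coming from $p_n(m)$ and from the coefficient normalisations cancel, as does the asymmetry in the Gamma and $(4\pi)$ powers, leaving the diagonal term $\delta_{n=m}$ together with the Kloosterman--Bessel term built from the symmetric kernel $S(n,m;c)J_{k-1}(4\pi\sqrt{nm}/c)/c$. Substituting the Rankin--Selberg relation for $\langle f,f\rangle$ converts the harmonic weight $\Gamma(k-1)/((4\pi)^{k-1}\langle f,f\rangle)$ into $\frac{2\pi^2}{k-1}\,L(1,{\rm sym}^2f)^{-1}$, which is exactly the constant in the statement, completing the proof.
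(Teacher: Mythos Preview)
Your proposal is correct: the Poincar\'e series argument you outline---computing $\langle P_n,P_m\rangle$ geometrically via the Bruhat decomposition and spectrally over the Hecke basis, then inserting the Rankin--Selberg identity $\langle f,f\rangle=\frac{\Gamma(k)}{2\pi^2(4\pi)^{k-1}}L(1,{\rm sym}^2f)$---is exactly the standard proof. The paper itself gives no argument and simply cites \cite[Proposition~14.5]{5} (Iwaniec--Kowalski), whose proof is precisely the one you have sketched.
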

\begin{proof}
See \cite[Proposition 14.5]{5}.
\end{proof}
\par For $(c_1,c_2)=1$, the Kloosterman sum satisfies the multiplicative property
\begin{equation}
S(n,m;c_1c_2) = S(n,m\overline{c_2}^2;c_1)S(n,m\overline{c_1}^2;c_2) \label{2eq1.15}
\end{equation}
and Weil's bound
\begin{equation}
|S(n,m;c)|\le \tau(c)c^{\frac{1}{2}}(n,m,c)^{\frac{1}{2}}.\label{2eq1.16}
\end{equation}

\par We will work with the following entire $L$-functions, defined for $f\in B_k$, $g\in B_{2k}$ and $\mathfrak{R}(s)>1$ by
\begin{align}
L(s,f)&=\sum_{n\ge 1}\frac{a_f(n)}{n^s},\\
L(s,{\rm sym}^2f)&=\sum_{n\ge 1}\frac{A_f(n,1)}{n^s} = \zeta(2s)\sum_{n\ge 1}\frac{a_f(n^2)}{n^s},\label{10eq29}\\
\begin{split}
L(s,{\rm sym}^2f\times g)&=\sum_{n,r\ge 1}\frac{A_f(n,r)a_g(n)}{(nr^2)^s},\\
&= \sum_{n,r,d\ge 1}\mu(d)\frac{A_f(n,1)A_f(r,1)a_g(nd)}{(nr^2d^3)^s}.\label{10eq210}
\end{split}
\end{align}
where $A_f(n,r) = A_f(r,n)$ are Fourier coefficients of the symmetric-square lift of $f$, a cusp form on GL$(3)$.
By the normalization (\ref{2eq1.10}), these $L$-functions have functional equations relating values at $s$ and $1-s$, so that the central point is always $s = 1/2$.
\par We observe as in \cite{1} the decomposition
\begin{equation}
||F||_4^4 = \langle F^2,F^2\rangle = \sum_{g\in B_{2k}}|\langle F^2,G\rangle|^2,\label{2eq1.21}
\end{equation}
which holds because $f^2$ is a cusp form of weight $2k$.
By Watson's formula, we have
\begin{equation}
|\langle F^2,G\rangle|^2 = \frac{\pi^3}{2(2k-1)}\frac{L(s,g)L(s,{\rm sym}^2f\times g)}{L(1,{\rm sym}^2f)^2L(1,{\rm sym}^2g)}.\label{2eq1.22}
\end{equation}
Thus
\begin{equation}
||F||_4^4 = \frac{\pi^3}{2(2k-1)L(1,{\rm sym}^2f)^2}\sum_{g\in B_{2k}}\frac{L(\frac{1}{2},g)L(\frac{1}{2},{\rm sym}^2f\times g)}{L(1,{\rm sym}^2g)}.\label{2eq1.23}
\end{equation}
\par We get a handle on the central values of $L$-functions using approximate functional equations.
\begin{lemma}[Approximate functional equation]\label{2thm1.4}
Let
\begin{equation}
\Gamma_{k,1}(s) = (2\pi)^{-s}\Gamma\left(s+k-\frac{1}{2}\right)\label{2eq1.24}
\end{equation}
and
\begin{equation}
\Gamma_{k,2}(s) = 8(2\pi)^{-3s-3k+\frac{3}{2}}\Gamma\left(s+2k-\frac{3}{2}\right)\Gamma\left(s+k-\frac{1}{2}\right)\Gamma\left(s+\frac{1}{2}\right).\label{2eq1.25}
\end{equation}
For $\xi ,\sigma >0$ and $j = 1,2$, define
\begin{equation}
V_{k,j}(\xi) = \frac{1}{2\pi i}\int_{(\sigma)}\frac{\Gamma_{k,j}(1/2+s)}{\Gamma_{k,j}(1/2)}\xi^{-s}\frac{ds}{s}.\label{2eq1.26}
\end{equation}
For $g\in B_{2k}$ we have
\begin{equation}
L(1/2,g) = 2\sum_{m\ge 1}\frac{a_g(m)}{m^{1/2}}V_{k,1}(m)\label{2eq1.27}
\end{equation}
and for $f\in B_k$ we have
\begin{equation}
L(1/2,{\rm sym}^2f\times g) = 2\sum_{n,r\ge 1}\frac{A_f(n,r)a_g(n)}{(nr^2)^{1/2}}V_{k,2}(nr^2).\label{2eq1.28}
\end{equation}
\end{lemma}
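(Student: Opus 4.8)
The plan is to obtain both identities from the usual contour‑integral expression for a central $L$‑value, so that the content of the proof lies entirely in assembling the standard analytic data --- analytic continuation, root number, and the precise completed gamma factor --- of the two $L$‑functions $L(s,g)$ and $L(s,{\rm sym}^2f\times g)$.

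For $L(s,g)$ with $g\in B_{2k}$ I would use that $\Lambda(s,g):=\Gamma_{k,1}(s)L(s,g)$ is entire, with $\Gamma_{k,1}$ equal to the archimedean factor up to an $s$‑independent constant, and satisfies $\Lambda(s,g)=\varepsilon_g\,\Lambda(1-s,g)$ with $\varepsilon_g=i^{2k}=1$ (level one, weight $2k$, $k$ even as in Theorem~\ref{thm1.3}). Fixing $\sigma>1$ so that $L(1/2+s,g)=\sum_{m\ge1}a_g(m)m^{-1/2-s}$ converges absolutely on the line $\mathfrak{R}(s)=\sigma$, I would write
\[
\frac{1}{2\pi i}\int_{(\sigma)}\Lambda\!\left(\tfrac12+s,g\right)\frac{ds}{s}-\frac{1}{2\pi i}\int_{(-\sigma)}\Lambda\!\left(\tfrac12+s,g\right)\frac{ds}{s}=\Lambda\!\left(\tfrac12,g\right),
\]
the right side being the residue at $s=0$, then substitute $s\mapsto-s$ in the second integral and apply the functional equation; with $\varepsilon_g=1$ this collapses to $\Lambda(\tfrac12,g)=\tfrac{2}{2\pi i}\int_{(\sigma)}\Lambda(\tfrac12+s,g)\,\tfrac{ds}{s}$. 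Dividing by $\Gamma_{k,1}(1/2)$, inserting the Dirichlet series and interchanging the (absolutely convergent) sum with the integral yields $(\ref{2eq1.27})$ with $V_{k,1}$ as in $(\ref{2eq1.26})$. The shift of contour and the interchange are legitimate by Stirling's formula, which makes $\Gamma_{k,1}(\tfrac12+s)/\Gamma_{k,1}(\tfrac12)$ decay faster than any polynomial along vertical lines, and by Deligne's bound $(\ref{2eq1.12})$ for $a_g$.

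For $L(s,{\rm sym}^2f\times g)$ with $f\in B_k$, this is the Rankin--Selberg convolution of the $\mathrm{GL}(3)$ cusp form ${\rm sym}^2f$ with the $\mathrm{GL}(2)$ form $g$, so by Jacquet--Piatetski-Shapiro--Shalika (equivalently, through the triple‑product integral and the factorization $(\ref{2eq1.8})$, upgraded to an identity of $L$‑functions) the completion $\Lambda(s,{\rm sym}^2f\times g):=\Gamma_{k,2}(s)L(s,{\rm sym}^2f\times g)$ is entire of degree $6$ with a functional equation $s\leftrightarrow1-s$; its root number $\varepsilon$ satisfies $\varepsilon\,\varepsilon_g=\varepsilon(f\times f\times g)$ by $(\ref{2eq1.8})$, and both $\varepsilon_g=1$ and $\varepsilon(f\times f\times g)=1$ (the latter forced by the non‑negativity of the central value in Watson's formula $(\ref{2eq1.22})$, or read off from the explicit functional equation), so $\varepsilon=1$. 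The archimedean factor is forced by the spectral parameters: ${\rm sym}^2f$ has parameters $\{k-1,0,-(k-1)\}$ at infinity and $g$ has $\{\pm(2k-1)/2\}$, so the Rankin--Selberg parameters are $\{\pm(2k-\tfrac32),\pm(k-\tfrac12),\pm\tfrac12\}$; writing out the corresponding three $\Gamma_{\mathbb C}$‑factors (conductor $1$) reproduces exactly $\Gamma_{k,2}(s)$ of $(\ref{2eq1.25})$, the constant $8=2^3$ and the power $(2\pi)^{-3k+3/2}$ coming from the convention $\Gamma_{\mathbb C}(u)=2(2\pi)^{-u}\Gamma(u)$. With these inputs the identical contour argument --- shift to $\mathfrak{R}(s)=-\sigma$, reflect, apply the functional equation, divide by $\Gamma_{k,2}(1/2)$, and expand the series $\sum_{n,r\ge1}A_f(n,r)a_g(n)(nr^2)^{-1/2-s}$, which converges absolutely for $\mathfrak{R}(s)$ large by the Rankin--Selberg bound for $A_f(n,r)$ together with $(\ref{2eq1.12})$ --- produces $(\ref{2eq1.28})$.

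The genuinely non‑mechanical points are pinning down $\Gamma_{k,1}$ and $\Gamma_{k,2}$ exactly, verifying that both root numbers equal $+1$ in the range of Theorem~\ref{thm1.3}, and the analytic continuation of the degree‑$6$ $L$‑function; all of this may instead be quoted directly from \cite{6} and its references, after which the remainder is the standard derivation of \cite[\S5.2]{5}. I would present the proof at exactly that level: cite the structural input for each $L$‑function, then perform the one‑line contour manipulation for each of $(\ref{2eq1.27})$ and $(\ref{2eq1.28})$.
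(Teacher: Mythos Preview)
Your proposal is correct and is precisely the standard derivation that underlies the result; the paper itself does not give any argument but simply cites \cite[Lemma~1.4]{6}, so your write-up is strictly more detailed than the paper's own ``proof''. The only point to watch is the root number $\varepsilon_g=i^{2k}=(-1)^k$, which equals $+1$ because of the restriction $k\equiv0\pmod 2$ in Theorem~\ref{thm1.3} --- you noted this, and it is exactly why the lemma is stated only in that context.
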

\begin{proof}
See \cite[Lemma 1.4]{6}.
\end{proof}
\par By Stirling's formula, for any $A>0$ and integer $B\ge 0$ we have (cf.\cite[(1.32)]{6})
\begin{equation}
\left(\frac{\xi}{k^j}\right)^BV_{k,j}^{(B)}(\xi) \ll_{A,B}\left(1+\frac{\xi}{k^j}\right)^{-A}.\label{2eq1.32}
\end{equation}
Thus the sums in (\ref{2eq1.27}) and (\ref{2eq1.28}) are essentially supported on $m<k^{1+\varepsilon}$ and $nr^2<k^{2+\varepsilon}$, respectively.
We also note that for $u\in (1,2)$ we have
\begin{equation}
\frac{d^B}{du^B}V_{uK,j}(\xi) \ll_{A,B}\left(1+\frac{\xi}{K^j}\right)^{-A}.\label{2eq1.33}
\end{equation}
by Stirling's formula.
\par We also need
\begin{lemma}\label{2thm1.5}
Given $0<\delta<1/10$, we have for all but $O(k^{100\delta})$ forms in $B_k$ that
\begin{equation}
\begin{split}
L(1,{\rm sym}^2f)^{-1} = &\sum_{d_1,d_2,d_3\ge 1}\frac{\mu(d_1d_2d_3)\mu(d_2)a_f(d_1^2d_2^2)}{d_1d_2^2d_3^3}{\rm exp}\left(-\frac{d_1d_2^2d_3^3}{k^\delta}\right)\\
&+O(k^{-\delta^2+\varepsilon}).
\end{split}
\end{equation}
\end{lemma}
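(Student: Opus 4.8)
The plan is to first establish an exact Dirichlet series identity for $L(s,{\rm sym}^2f)^{-1}$ valid for $\mathfrak{R}(s)>1$, then pass to $s=1$ by a smoothed Mellin inversion, and finally control the resulting error term by a zero-density estimate for the symmetric-square family --- it is this last step that is responsible for the exceptional set of size $O(k^{100\delta})$. For the identity, write the local parameters of $f$ at $p$ as $\alpha_p,\beta_p$, so $\alpha_p\beta_p=1$, $\alpha_p+\beta_p=a_f(p)$, and $L(s,{\rm sym}^2f)$ has Euler factor $(1-\alpha_p^2p^{-s})^{-1}(1-p^{-s})^{-1}(1-\beta_p^2p^{-s})^{-1}$. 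Thus
\begin{equation*}
L(s,{\rm sym}^2f)^{-1}=\prod_p(1-\alpha_p^2p^{-s})(1-p^{-s})(1-\beta_p^2p^{-s}).
\end{equation*}
Using $\alpha_p^2\beta_p^2=1$ and, from the Hecke relation (\ref{2eq1.11}), $\alpha_p^2+\beta_p^2=a_f(p)^2-2=a_f(p^2)-1$, the $p$-th factor multiplies out to $1-a_f(p^2)p^{-s}+a_f(p^2)p^{-2s}-p^{-3s}$. On the other hand, in the Dirichlet polynomial on the right of the claimed identity only triples $(d_1,d_2,d_3)$ with $d_1d_2d_3$ squarefree occur, so for each $p$ at most one of $d_1,d_2,d_3$ is divisible by $p$, and only to first order; the four possibilities $(1,1,1),(p,1,1),(1,p,1),(1,1,p)$ contribute, respectively, $1$, $-a_f(p^2)p^{-s}$, $a_f(p^2)p^{-2s}$, $-p^{-3s}$, which matches the local factor just computed. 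Hence, for $\mathfrak{R}(s)>1$,
\begin{equation*}
L(s,{\rm sym}^2f)^{-1}=\sum_{d_1,d_2,d_3\ge1}\frac{\mu(d_1d_2d_3)\mu(d_2)a_f(d_1^2d_2^2)}{(d_1d_2^2d_3^3)^s},
\end{equation*}
the series converging absolutely there by Deligne's bound (\ref{2eq1.12}).

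Write $M(f)$ for the smoothed sum in the statement. Inserting $e^{-y}=\frac1{2\pi i}\int_{(2)}\Gamma(s)y^{-s}\,ds$ with $y=d_1d_2^2d_3^3/k^{\delta}$ and interchanging sum and integral (legitimate by absolute convergence) gives
\begin{equation*}
M(f)=\frac1{2\pi i}\int_{(2)}\frac{\Gamma(s)\,k^{\delta s}}{L(1+s,{\rm sym}^2f)}\,ds .
\end{equation*}
I would then shift the contour to $\mathfrak{R}(s)=-\delta$. In the strip $-\delta\le\mathfrak{R}(s)\le2$ the integrand is holomorphic apart from the simple pole of $\Gamma$ at $s=0$, whose residue is $L(1,{\rm sym}^2f)^{-1}$ (the expected main term) --- \emph{provided} $L(s,{\rm sym}^2f)$ has no zero in $1-\delta\le\mathfrak{R}(s)\le1$; recall that $L(s,{\rm sym}^2f)$ is entire and, by Rankin--Selberg theory, nonvanishing on $\mathfrak{R}(s)=1$. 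Using the exponential decay of $\Gamma$ in vertical strips one truncates the shifted integral at $|\mathfrak{I}(s)|\le(\log k)^{2}$ with negligible error, and on $\mathfrak{R}(s)=-\delta$ one has $|k^{\delta s}|=k^{-\delta^2}$. Thus, \emph{as soon as} $L(s,{\rm sym}^2f)$ is nonvanishing in $1-\delta\le\mathfrak{R}(s)\le1$ and
\begin{equation*}
\bigl|L(1-\delta+it,{\rm sym}^2f)\bigr|^{-1}\ll k^{\varepsilon}\qquad\text{for }|t|\le(\log k)^{2},
\end{equation*}
the shift is valid and yields $M(f)=L(1,{\rm sym}^2f)^{-1}+O(k^{-\delta^2+\varepsilon})$.

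The remaining --- and decisive --- point is that these two conditions on $f$ hold for all but $O(k^{100\delta})$ of the forms $f\in B_k$; equivalently, all but $O(k^{100\delta})$ forms have $L(s,{\rm sym}^2f)$ free of zeros in the box $\mathfrak{R}(s)\ge1-\delta$, $|\mathfrak{I}(s)|\le(\log k)^{2}$, with $|L(s,{\rm sym}^2f)|^{-1}\ll k^{\varepsilon}$ there. I would deduce this from a log-free zero-density estimate for the family $\{L(s,{\rm sym}^2f):f\in B_k\}$. Concretely: represent $L(s,{\rm sym}^2f)$ near $\mathfrak{R}(s)=1$, via an approximate functional equation, by a Dirichlet polynomial of length a fixed power of $k$ in the ${\rm GL}(3)$ coefficients $A_f(n,1)$ (plus a rapidly decaying tail); bound its even power moments over $f\in B_k$ using the Petersson formula (Lemma \ref{2thm1.3}), the Hecke relations for $A_f$, and Weil's bound (\ref{2eq1.16}) for the Kloosterman sums that arise; and feed this into the standard zero-detecting device, in which a suitably mollified Dirichlet polynomial is $\gg1$ near any putative zero, so that each offending $f$ is counted with positive weight by the moment bound. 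Since the analytic conductor here is $k^{O(1)}$, this should bound the number of $f$ whose symmetric-square $L$-function has a zero of real part $\ge1-\delta$ and height $\le(\log k)^{2}$ by $\ll k^{A\delta+\varepsilon}$, with $A$ an absolute constant comfortably below $100$; for the remaining forms a Borel--Carath\'eodory argument, together with the convexity bound $|L(s,{\rm sym}^2f)|\ll k^{O(1)}$ just to the right of the critical line, upgrades ``no zero in a slightly enlarged box'' to $|L(s,{\rm sym}^2f)|^{-1}\ll k^{\varepsilon}$ on the box above. I expect the decisive obstacle to be exactly this: securing the zero-density estimate in log-free form and with an exponent small enough that the exceptional set is genuinely $O(k^{100\delta})$ and not merely $O(k^{1-c})$. (An alternative avoiding zeros altogether is to bound $\sum_{f\in B_k}|M(f)L(1,{\rm sym}^2f)-1|^2$ by a mollified second moment of $L(1,{\rm sym}^2f)$ and conclude by Chebyshev's inequality, which again leads to an exceptional set of size $O(k^{100\delta})$.)
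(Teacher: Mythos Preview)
The paper does not prove this lemma itself; it simply refers to \cite[Lemma~1.5]{6}. Your sketch is a correct reconstruction of the standard argument (and almost certainly of the one in \cite{6}): the Euler-product identity for $L(s,{\rm sym}^2f)^{-1}$ is verified correctly (your local computation $1-a_f(p^2)p^{-s}+a_f(p^2)p^{-2s}-p^{-3s}$ matches the four admissible triples $(1,1,1),(p,1,1),(1,p,1),(1,1,p)$ exactly), the Mellin-inversion and contour-shift steps are routine, and your identification of a log-free zero-density estimate for the family $\{L(s,{\rm sym}^2f):f\in B_k\}$ as the source of the exceptional set $O(k^{100\delta})$ is on target. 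Such estimates are available in the literature (e.g.\ via the Kowalski--Michel large-sieve machinery, or specific results of Lau--Wu for symmetric-square $L$-functions), and the generous constant $100$ makes the demand on the zero-density exponent very mild. One small simplification: the two conditions you impose on $f$ --- no zero in the box, and $|L|^{-1}\ll k^{\varepsilon}$ there --- are not independent; as you yourself note, the second follows from a slightly wider zero-free box via Borel--Carath\'eodory together with convexity, so a single zero-density input suffices.
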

\begin{proof}
See \cite[Lemma 1.5]{6}.
\end{proof}
\section{Bessel functions}
For $l>0$ an integer, the $J$-Bessel function
\begin{equation}
J_l(x) = \int_{-\frac{1}{2}}^{\frac{1}{2}}e(lt)e^{-ix\sin(2\pi t)}dt\label{2eq1.47}
\end{equation}
satisfies the bounds (cf.\cite[Lemma 4.1-4.3]{7})
\begin{equation}
J_l(x) \ll \left\{\begin{array}{ll}
e^{-l} &for\quad x<l/10,\\
{\rm min}(l^{-1/3},|x^2-l^2|^{-1/4}) \quad&for \quad x\ge l/10\end{array}\right.\label{2eq1.48}
\end{equation}
and for all $x\ge 0$ and $v \ge 15$,
\begin{equation}
|J_v(vx)|\le (2\pi v)^{-1/2}(\frac{1}{2}ex)^v.\label{eq1.33}
\end{equation}
\par The value of $J_{k-1}(y)$ on average over integers $k$ divisible by $4$ is well understood.
We need
\begin{lemma}\label{2thm1.7}
Suppose $g$ is a smooth, real function compactly supported on $\mathbb{R}^{+}$ and $a = \pm 1$.
Then
\begin{equation}
4\sum_{l\equiv a(mod\, 4)}g(l)J_l(x) = g(x)+i^{1-a}h(x)+O(xc_3(g))\label{eq1.34}
\end{equation}
where
\begin{equation}
h(x) = \int_0^{\infty}g(\sqrt{2xy})\sin(x+y-\frac{\pi}{4})(\pi y)^{-1/2}dy \label{eq1.35}
\end{equation}
and
\begin{equation}
c_3(g) = \int_{-\infty}^{\infty}|\hat{g}(t)t^3|dt
\end{equation}
where $\hat{g}$ is the Fourier transform
\begin{equation}
\hat{g}(t) = \int_{-\infty}^{\infty}g(y)e(ty)dy
\end{equation}
and the implied constant is absolute.
\end{lemma}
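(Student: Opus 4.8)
The plan is to combine the integral representation (\ref{2eq1.47}) with Poisson summation: this converts the weighted Bessel sum into a short combination of oscillatory integrals over $\mathbb{R}$ that can be evaluated by a local analysis near $s=0$, producing the main term $g(x)$ from a Fourier‑inversion pairing and the secondary term $h(x)$ from a Fresnel integral.

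First I would detect the congruence by the orthogonality relation $\tfrac14\sum_{j=0}^{3}e\big(j(l-a)/4\big)$, which equals $1$ when $l\equiv a\,(\mathrm{mod}\,4)$ and $0$ otherwise, so that
\[
4\sum_{l\equiv a\,(4)}g(l)J_l(x)=\sum_{j=0}^{3}e(-aj/4)\sum_{l\in\mathbb{Z}}g(l)\,e(jl/4)\,J_l(x).
\]
Because $g$ has compact support inside $(0,\infty)$ every $l$-sum is finite; inserting (\ref{2eq1.47}) and interchanging sum and integral gives $\sum_l g(l)e(jl/4)J_l(x)=\int_{-1/2}^{1/2}e^{-ix\sin 2\pi t}\sum_l g(l)e(l(t+j/4))\,dt$, and Poisson summation replaces $\sum_l g(l)e(l(t+j/4))$ by $\sum_{n\in\mathbb{Z}}\hat g(n+t+j/4)$. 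The intervals $[-\tfrac12,\tfrac12]+n+\tfrac j4$ tile $\mathbb{R}$ as $n$ runs over $\mathbb{Z}$, and under $s=n+t+j/4$ one has $\sin 2\pi t=\sin(2\pi s-\pi j/2)$, so the $n$-sum unfolds the $t$-integral into $\int_{-\infty}^{\infty}e^{-ix\sin(2\pi s-\pi j/2)}\hat g(s)\,ds$. Computing $\sin(2\pi s-\pi j/2)$ for $j=0,1,2,3$ and simplifying the phases $e(-aj/4)$ with $a=\pm1$, the four terms collapse to
\[
4\sum_{l\equiv a\,(4)}g(l)J_l(x)=-2i\int_{-\infty}^{\infty}\hat g(s)\sin(x\sin 2\pi s)\,ds+2a\int_{-\infty}^{\infty}\hat g(s)\sin(x\cos 2\pi s)\,ds.
\]

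For the first integral the elementary inequality $|\sin(x\sin 2\pi s)-\sin(2\pi xs)|\le x|\sin 2\pi s-2\pi s|\le\tfrac{(2\pi)^3}{6}x|s|^3$ replaces it by $\int\hat g(s)\sin(2\pi xs)\,ds$ with error $O(xc_3(g))$, and Fourier inversion together with $g(-x)=0$ identifies this as $-\tfrac1{2i}g(x)$; hence the first term is $g(x)+O(xc_3(g))$. For the second integral I would first truncate to $|s|\le\tfrac1{10}$, the tail being $\ll\int_{|s|>1/10}|\hat g(s)|\,ds\ll c_3(g)$, and then replace $x\cos 2\pi s$ by $x(1-2\pi^2 s^2)$ using $|\cos 2\pi s-1+2\pi^2 s^2|\le\tfrac{(2\pi)^4}{24}s^4$ together with $s^4\le\tfrac1{10}|s|^3$ on this range, again at a cost of $O(xc_3(g))$. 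Restoring the full line and writing $\hat g(s)=\int g(w)e(sw)\,dw$, the Fresnel integral $\int_{-\infty}^{\infty}e^{\pm i(2\pi ws-2\pi^2xs^2)}\,ds=(2\pi x)^{-1/2}e^{\pm i(w^2/(2x)-\pi/4)}$ (legitimate after a Gaussian regularization) evaluates $\int\hat g(s)\sin(x(1-2\pi^2 s^2))\,ds$ exactly, and the substitution $w=\sqrt{2xy}$ turns it into $\tfrac12 h(x)$. Since $i^{1-a}=a$ for $a=\pm1$, the second term contributes $i^{1-a}h(x)+O(xc_3(g))$, and adding the two pieces proves the lemma.

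The orthogonality, Poisson, and Fourier‑inversion steps are routine; the one place that needs care is the $\sin(x\cos 2\pi s)$ integral, where squeezing the error down to exactly $O(xc_3(g))$ rather than a fourth moment of $\hat g$ dictates the truncate‑then‑Taylor ordering, and where the precise constant and the exact shape of $h(x)$ emerge only from carrying the Fresnel integral through rather than settling for the leading term of stationary phase.
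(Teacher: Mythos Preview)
The paper does not prove this lemma at all: its entire proof is the citation ``See \cite[Lemma 5.8]{4}'' (Iwaniec, \emph{Topics in Classical Automorphic Forms}). Your argument is in fact a faithful outline of Iwaniec's own proof of that lemma---detect the residue class by additive characters, insert the integral representation (\ref{2eq1.47}), apply Poisson summation and unfold, then Taylor-expand $\sin 2\pi s$ and $\cos 2\pi s$ near $s=0$ and evaluate the resulting Fresnel integral---so there is no methodological divergence to discuss; you have simply supplied what the paper outsources.

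One small point worth tightening: when you truncate the $\sin(x\cos 2\pi s)$ integral to $|s|\le\tfrac1{10}$ and bound the tail by $\int_{|s|>1/10}|\hat g(s)|\,ds\ll c_3(g)$, this is $O(c_3(g))$ rather than $O(xc_3(g))$, so as written the error term is only correct for $x\gg 1$. In every application in the paper (and in Iwaniec) one has $x\asymp K$, so this is harmless, but if you want the stated error uniformly in $x>0$ you should either note the restriction or absorb the tail differently. Otherwise the computation---including the identification $i^{1-a}=a$ and the Fresnel evaluation giving exactly $\tfrac12 h(x)$---checks out.
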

\begin{proof}
See \cite[Lemma 5.8]{4}.
\end{proof}
\begin{remark}
By partial integration, we have
\begin{equation}
h(x)\ll (xK^{-2})^j\label{eq1.36}
\end{equation}
and
\begin{equation}
c_3(g) \ll K^{-3} \label{3eq3.9}
\end{equation}
for any $j\ge 0$ and $supp\, g \subset [K,2K]$ and $g^{(j)} \ll K^{-j}$.
\end{remark}
\par As in \cite{6} we also need to understand the following average value of a product of two Bessel functions:
\begin{equation}
\sum_{k\equiv 0\,mod\,2}i^kh\left(\frac{k-K}{H}\right)J_{k-1}(x)J_{2k-1}(y)\label{2eq1.54}
\end{equation}
for $0<x<K^{2+\varepsilon}$ and $0<y<K^{3/2+\varepsilon}$.
\begin{lemma}\label{2thm1.8}
Let $A>0$ and $h$ be a smooth, non-negative function supported on $(1,2)$ with bounded derivatives.
Let $x$ and $y$ belong to the set
\begin{equation}
\begin{split}
\{0<x\le K^{4/3-\varepsilon},y>0\} &\cup\{K^{4/3-\varepsilon}<x<K^{2-\varepsilon}, 0<y<xK^{-\varepsilon}\}\\
&\cup\{K^{4/3-\varepsilon}<x<K^{2-\varepsilon},y>xK^{\varepsilon}\}.\label{2eq1.55}
\end{split}
\end{equation}
There exist smooth functions $H^{\pm}(\xi_1;\xi_2,\xi_3,\xi_4)$ depending on $A$ with bounded derivatives in any compact set, polynomial in $\xi_2$, $\xi_3$, $\xi_4$, and supported on
\begin{equation}
1\ll |\xi_1|\ll 1,\label{2eq1.56}
\end{equation}
such that
\begin{equation}
\begin{split}
\sum_{k\equiv 0\,mod\,2} & i^kh\left(\frac{k}{K}\right)J_{k-1}(4\pi x)J_{2k-1}(4\pi y)\\
=&\frac{1}{\sqrt{x}}\sum_{\pm}e\left(\pm\left(\frac{y^2}{4x}+2x\right)\right)\left(1\pm\frac{iy}{4x\sqrt{1-(\frac{y}{4x})^2}}\right)H^{\pm}\left(\frac{y\sqrt{1-(\frac{y}{4x})^2}}{2K}\right)\\
&+O_A(K^{-A}),\label{2eq1.57}
\end{split}
\end{equation}
where
\begin{equation}
\begin{split}
H^{\pm} & \left(\frac{y\sqrt{1-(\frac{y}{4x})^2}}{2K}\right)\\
&=H^{\pm}\left(\frac{y\sqrt{1-(\frac{y}{4x})^2}}{2K};\frac{x(1-2(\frac{y}{4x})^2)}{K^2},\frac{y\sqrt{1-(\frac{y}{4x})^2}}{2K^3},\frac{y}{4x}\right)\delta_{\frac{y}{4x}<1}.\label{2eq1.58}
\end{split}
\end{equation}
\par Property (\ref{2eq1.56}) implies that $H^{\pm}\left(\frac{y\sqrt{1-(\frac{y}{4x})^2}}{2K}\right)$ is nonzero only for $x,y\gg K$ and supported on
\begin{equation}
K^2/y^2\ll 1-y/4x\ll K^2/y^2.\label{2eq1.59}
\end{equation}
\end{lemma}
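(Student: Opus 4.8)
The plan is to follow the strategy of \cite{6}: open both Bessel functions by the integral representation (\ref{2eq1.47}), carry out the sum over $k$ by Poisson summation, and evaluate the resulting oscillatory integral by stationary phase, treating the three ranges of $(x,y)$ in (\ref{2eq1.55}) uniformly. First I would write $J_{k-1}(4\pi x)=\int_{-1/2}^{1/2}e((k-1)t)e^{-4\pi i x\sin 2\pi t}\,dt$ and likewise for $J_{2k-1}(4\pi y)$, so that the left-hand side of (\ref{2eq1.57}) becomes a double integral over $(t,u)$ whose only $(t,u)$-independent ingredient is $\sum_{k\equiv 0(2)}i^kh(k/K)e((k-1)t+(2k-1)u)$. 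Writing $k=2m$ and $i^{2m}=e(m/2)$ and applying Poisson, this sum becomes $\frac K2\sum_n\widehat h\!\left(-\tfrac K2\left(n-\tfrac12-2t-4u\right)\right)$ times a unimodular factor, and by the rapid decay of $\widehat h$ only the $O(1)$ values of $n$ for which $2t+4u$ lies within $K^{\varepsilon-1}$ of a half-integer survive, up to $O_A(K^{-A})$. Changing variables in the $(t,u)$-integral to a variable $s$ along the resulting near-line together with a rescaled transverse coordinate $v=O(K^{\varepsilon})$ (the outgoing $\tfrac K2$ cancelling the Jacobian), one is left, for each relevant $n$, with an integral $\iint e^{i\Phi(v,s)}\widehat h(-v/2\pi)\left(1+O(v/K)\right)\,dv\,ds$ where, after recentring and up to harmless constants, $\Phi(v,s)=4\pi x\cos(2s-v/K)-4\pi y\cos s-v/K$.

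The heart of the proof is the stationary-phase analysis of this integral. One has $\partial_s\Phi=4\pi\sin s\,(y-4x\cos s)+O(xv/K)$ (for $n$ and $s$ outside the relevant range, $\partial_s\Phi\asymp\max(x,y)\gg K$ throughout the support and repeated integration by parts is admissible), so the stationary points are $s=0$ and, \emph{precisely when} $y/4x<1$, the point with $\cos s=y/4x$. At this second point a direct computation gives
\[
\Phi\big|_{\mathrm{st}}=-2\pi\Big(\tfrac{y^2}{4x}+2x\Big)+\mathrm{const},\qquad \partial_s^2\Phi\big|_{\mathrm{st}}=16\pi x\left(1-(y/4x)^2\right),
\]
which is the source of the phase $e(\pm(\tfrac{y^2}{4x}+2x))$ — the two signs coming from the two cosine branches of the Bessel asymptotics — and, after one-dimensional stationary phase in $s$, of the factor $x^{-1/2}\left(1-(y/4x)^2\right)^{-1/2}$. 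Carrying out the $v$-integral next, the residual $v$-phase is $v\cdot(\pm4\pi\xi_1)+O(v^2x/K^2)$ with $\xi_1=y\sqrt{1-(y/4x)^2}/2K$ the value of $\partial_v\Phi$ at the $s$-stationary point, so Fourier duality gives $\int e^{\pm4\pi i\xi_1 v}\widehat h(-v/2\pi)\,dv\propto h(\mp2\xi_1)$, a smooth bounded function supported on $\xi_1\asymp 1$: this is exactly (\ref{2eq1.56}), whence $x,y\gg K$ and, reading off the definition of $\xi_1$, (\ref{2eq1.59}). Expanding the leftover terms $O(v^2x/K^2)$, $O(v/K)$ and the $K^{-2}$-corrections coming from $k-1,2k-1$ versus $k,2k$ and from the subleading Bessel asymptotics, and then resumming the two branch contributions, produces the polynomial dependence of $H^{\pm}$ on $\xi_2,\xi_3,\xi_4$ and the amplitude factor $1\pm iy/(4x\sqrt{1-(y/4x)^2})$. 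Finally, the stationary point $s=0$ — the only one when $y>4x$ — has $\partial_v\Phi=0$ there, so its contribution is proportional to $h(0)=0$; this is the cutoff $\delta_{y/4x<1}$ in (\ref{2eq1.58}), and it shows the whole quantity is $O_A(K^{-A})$ in the third range of (\ref{2eq1.55}) and likewise on the part of the second range with $y\not\asymp K$.

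I expect the principal difficulty to be making all of this uniform over (\ref{2eq1.55}). The non-stationary estimates gain a factor $\max(x,y)/K$ per integration by parts, so they require the lower cut-off $x>K^{4/3-\varepsilon}$ of the second and third ranges; in the first range the portion $x\ll K^{1-\varepsilon}$ — where $J_{k-1}(4\pi x)$ is either exponentially small by (\ref{2eq1.48}) or in the Airy transition regime, so the Debye expansion used above is unavailable — has to be treated separately, using the pointwise bounds (\ref{2eq1.48}) and (\ref{eq1.33}) together with the fact that the claimed main term vanishes there. The second, more laborious, obstacle is the bookkeeping: checking that every lower-order contribution assembles into a single family $H^{\pm}(\xi_1;\xi_2,\xi_3,\xi_4)$ that is smooth with bounded derivatives on compact sets, polynomial in $\xi_2,\xi_3,\xi_4$, and supported where $1\ll|\xi_1|\ll 1$.
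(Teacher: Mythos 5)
The paper offers no proof of this lemma: it is imported verbatim from Khan \cite{6}, Lemma 1.8, and the in-text ``proof'' is just that citation, so there is no internal argument to compare against. Your sketch reconstructs what is essentially the argument behind Khan's result --- the integral representation (\ref{2eq1.47}) for both Bessel factors, Poisson summation in $k$ to localize $2t+4u$ near half-integers, and stationary phase in the surviving double integral --- and the checkable computations are right: with your $\Phi$, the interior stationary point $\cos s=y/4x$ gives $\Phi=-2\pi\bigl(y^2/4x+2x\bigr)$ and $\partial_s^2\Phi=16\pi x\bigl(1-(y/4x)^2\bigr)$, while $\partial_v\Phi=4\pi\xi_1$ with $\xi_1=y\sqrt{1-(y/4x)^2}/(2K)$, so the $v$-integration produces $h$ at an argument $\asymp\xi_1$; this is exactly the source of (\ref{2eq1.56}), of (\ref{2eq1.59}), of the cutoff $\delta_{y/4x<1}$ in (\ref{2eq1.58}), and of the vanishing of the $s=0$ contribution. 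The plan is sound and consistent with the statement.

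The one point where your explanation goes wrong is the origin of the exponent $4/3$ and of the excluded band $xK^{-\varepsilon}<y<xK^{\varepsilon}$ in (\ref{2eq1.55}). You attribute the cutoff $x>K^{4/3-\varepsilon}$ in the second and third ranges to losses in the non-stationary integrations by parts, but the binding constraint is the near-coalescence of the two stationary points $s=0$ and $s_0=\arccos(y/4x)\asymp\sqrt{1-y/4x}$. Their separation, measured against the width $|\partial_s^2\Phi|^{-1/2}\asymp\bigl(x(1-y/4x)\bigr)^{-1/2}$ of each, is $\asymp\sqrt{x}\,(1-y/4x)$; on the support (\ref{2eq1.59}) of the main term, where $1-y/4x\asymp K^2/y^2\asymp K^2/x^2$, this ratio is $\asymp K^2/x^{3/2}$, which exceeds $K^{\varepsilon}$ precisely when $x\ll K^{4/3-\varepsilon}$. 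That is why for $K^{4/3-\varepsilon}<x<K^{2-\varepsilon}$ the lemma excises $y\asymp x$ --- exactly where the main term would otherwise live --- and defers that region to the cruder bounds of Lemma \ref{2thm1.9} (note its hypothesis (\ref{2eq1.85})), and why $x>K^{2-\varepsilon}$ is handled separately in Lemma \ref{2thm1.10}. A complete write-up must make this two-point separation quantitative (or invoke a uniform/Airy-type expansion there); more integrations by parts alone will not close that case.
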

\begin{proof}
See \cite[Lemma 1.8]{6}.
\end{proof}
\par In the remaining ranges, we find suitable upper bounds for (\ref{2eq1.54}).
\begin{lemma}\label{2thm1.9}
For $x$ and $y$ such that $K^{4/3-\varepsilon}<x<K^{2-\varepsilon}$, $xK^{-\varepsilon}<y<xk^{\varepsilon}$ and $h$ be a smooth, non-negative function supported on $(0,1)$ with bounded derivatives, we have
\begin{equation}
\sum_{k\equiv 0\,mod\,2}i^kh\left(\frac{k-K}{H}\right)J_{k-1}(x)J_{2k-1}(y)\ll \frac{H}{\sqrt{xy}}.\label{2eq1.84}
\end{equation}
Furthermore if $x$ and $y$ satisfy
\begin{equation}
|1-\frac{y}{4x}|>\frac{K^{2+\varepsilon}}{x^2},\label{2eq1.85}
\end{equation}
then we have
\begin{equation}
\sum_{k\equiv 0\,mod\,2}i^kh\left(\frac{k-K}{H}\right)J_{k-1}(x)J_{2k-1}(y)\ll_B \frac{x}{K}\left(\frac{x^2}{K^2|4x-y|}\right)^B \label{2eq1.86}
\end{equation}
for any integer $B\ge 0$.
\end{lemma}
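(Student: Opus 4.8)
The first bound \eqref{2eq1.84} is just the triangle inequality. Since $x>K^{4/3-\varepsilon}$, for $k\asymp K$ we have $x\gg k-1$, hence $x>(k-1)/10$ and $x^2-(k-1)^2\gg x^2$, so \eqref{2eq1.48} gives $J_{k-1}(x)\ll x^{-1/2}$; likewise $y>xK^{-\varepsilon}\gg K^{4/3-2\varepsilon}\gg 2k-1$ gives $J_{2k-1}(y)\ll y^{-1/2}$. As $h((k-K)/H)$ is supported on an interval of length $H$, summing term by term gives $\ll H(xy)^{-1/2}$.

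For \eqref{2eq1.86} I would run the mechanism behind Lemma~\ref{2thm1.8} in this short-interval setting. Open both Bessel functions via \eqref{2eq1.47}, write $i^k=e(k/4)$, and apply Poisson summation to the $k$-sum against the weight $h((k-K)/H)$. Because the $k$-dependence in the integrand is linear, this replaces the $k$-sum by $\tfrac H2\sum_\ell e(c_\ell)\,\widehat h\!\bigl(-\tfrac H2(2t_1+4t_2+\tfrac12-\ell)\bigr)$, so up to $O_A(K^{-A})$ one is left with finitely many two–dimensional integrals
\[
\tfrac H2\sum_\ell e(c_\ell)\iint e^{i\Phi_\ell(t_1,t_2)}\,\widehat h\!\Bigl(-\tfrac H2\bigl(2t_1+4t_2+\tfrac12-\ell\bigr)\Bigr)e(-t_1-t_2)\,dt_1\,dt_2 ,
\]
with $\Phi_\ell(t_1,t_2)=-x\sin 2\pi t_1-y\sin 2\pi t_2+\pi K\bigl(2t_1+4t_2+\tfrac12-\ell\bigr)$. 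The rapid decay of $\widehat h$ confines $(t_1,t_2)$ to thin strips around the lines $2t_1+4t_2\equiv-\tfrac12\pmod 1$, and since $\partial_{t_1}\Phi_\ell=-2\pi x\cos 2\pi t_1+2\pi K$ is $\gg x$ for $t_1$ away from $\pm\tfrac14$ (and similarly for $t_2$), integration by parts shows that only the critical point with $\cos 2\pi t_1=K/x$, $\cos 2\pi t_2=2K/y$ can contribute; of the four nearby choices of signs, the one lying $O(K^\varepsilon/H)$ from a resonant strip is $t_1\approx\tfrac14$, $t_2\approx-\tfrac14$ (valid only for $y<4x$, which is the source of the $\delta_{y/4x<1}$ in Lemma~\ref{2thm1.8}).

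The size of \eqref{2eq1.86} is then governed by how far this critical point sits from that strip: a short computation gives $2t_1^\ast+4t_2^\ast+\tfrac12\asymp\tfrac{K(4x-y)}{xy}$, so that $\widehat h$ is sampled at a point of size $\asymp\tfrac{HK|4x-y|}{xy}$, and hypothesis \eqref{2eq1.85} keeps the critical point a definite distance off the strip. Carrying out stationary phase in the direction along the strip — where $\partial^2\Phi_\ell\asymp|4x-y|$ — and then repeatedly integrating by parts in the transverse direction, where under \eqref{2eq1.85} the phase has no stationary point and the governing quotient $\Phi_\ell''/(\Phi_\ell')^2$ of transverse derivatives is $\asymp\tfrac{x^2}{K^2|4x-y|}$, produces the decay in \eqref{2eq1.86}, with a harmless amplitude $\ll x/K$ (any crude bound on the truncated $(t_1,t_2)$-integral is $\ll K^\varepsilon$, which this dominates). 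The hard part is precisely this last step: one must check that under \eqref{2eq1.85} the transverse phase derivative stays bounded below across the shrinking range cut out by $\widehat h$ and control all of its higher derivatives there, so that the integration by parts converges — this is exactly where the threshold $K^{2+\varepsilon}/x^2$ on $|1-y/4x|$ enters — and one must absorb the boundary terms and the tails of the Bessel asymptotics, which in practice forces a split into subcases according to the relative sizes of $x$, $K$, $H$ and $|4x-y|$.
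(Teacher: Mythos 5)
Your proof of the first bound \eqref{2eq1.84} is exactly the paper's (the paper's entire argument is ``follows directly from \eqref{2eq1.48}''), and your execution is correct: in the stated range both Bessel functions are in the oscillatory regime with $|x^2-(k-1)^2|\gg x^2$ and $|y^2-(2k-1)^2|\gg y^2$, and there are $O(H)$ terms.

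For the second bound \eqref{2eq1.86} the paper offers no argument at all --- it simply cites \cite[Lemma 1.9]{6}, which is the dyadic-weight version $h(k/K)$ --- so you are attempting something the paper does not do, and your sketch has a genuine gap precisely at the step you yourself flag as ``the hard part.'' Two of the assertions propping up that step are wrong. First, hypothesis \eqref{2eq1.85} does \emph{not} keep the critical point a definite distance off the resonant strip: the offset is $2t_1^\ast+4t_2^\ast+\tfrac12\asymp K(4x-y)/(xy)$ while $\widehat h$ localizes to a strip of width $\asymp K^{\varepsilon}/H$, and the ratio $HK|4x-y|/(xy)$ is only guaranteed by \eqref{2eq1.85} to exceed $\asymp HK^{3+\varepsilon}/x^{3}$, which is $\ll 1$ once $x$ is near $K^{2-\varepsilon}$; so the rapid decay of $\widehat h$ cannot be the mechanism, and the subsequent claim that one can ``integrate by parts in the transverse direction where the phase has no stationary point'' fails at the 2D critical point, where the transverse derivative vanishes by definition. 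Second, and more structurally: with the short-interval weight $h((k-K)/H)$ every integration by parts differentiates a function of scale $H$, so the gain per step is $\asymp\max\bigl(\tfrac{1}{H\lambda},\tfrac{\Phi''}{(\Phi')^2}\bigr)=\tfrac{x^2}{HK|4x-y|}$ with $\lambda\asymp K|4x-y|/(xy)$, not the $\tfrac{x^2}{K^2|4x-y|}$ appearing in \eqref{2eq1.86}; the $K^2$ there is an artifact of Khan's weight $h(k/K)$, whose derivatives have scale $K^{-1}$. Your mechanism therefore proves at best \eqref{2eq1.86} with $HK$ in place of $K^2$ (and only when $H\lambda\gg 1$), which is weaker than the stated lemma. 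To be fair, this mismatch is inherited by the paper itself, which quotes Khan's bound verbatim for the short-interval sum without comment; but it means your sketch, as written, does not establish the statement you set out to prove, and a correct treatment must either work at scale $H$ throughout and accept the weaker exponent, or sum the complete sign decomposition of the two Bessel asymptotics over $k$ and isolate the resonant branch before applying the derivative test.
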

\begin{proof}
The first estimate (\ref{2eq1.84}) follows directly from (\ref{2eq1.48}).
For the second estimate, see \cite[Lemma 1.9]{6}.
\end{proof}
\par For very large $x$, we have
\begin{lemma}\label{2thm1.10}
For $x>K^{2-\varepsilon}$, $0<y<K^{2+\varepsilon}$ and $h$ as in Lemma \ref{2thm1.9}, we have
\begin{equation}
\sum_{k\equiv 0\,mod\,2}i^kh\left(\frac{k-K}{H}\right)J_{k-1}(x)J_{2k-1}(y)\ll K^{-1+\varepsilon}+HK^{-11/6}.\label{2eq1.90}
\end{equation}
\end{lemma}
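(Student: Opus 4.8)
We may assume $y\ge (2K-1)/10$; otherwise $y<(2k-1)/10$ for every $k\in[K,K+H]$, so $J_{2k-1}(y)\ll e^{-(2k-1)}\ll K^{-A}$ by \eqref{2eq1.48} and the sum over $O(H)$ terms is $\ll K^{-A}$. The idea is that, since $x>K^{2-\varepsilon}$ is much larger than $k-1\asymp K$, the oscillation of $J_{k-1}(x)$ is essentially explicit, so that after separating it the remaining sum over $k$ of $J_{2k-1}(y)$ can be handled by Lemma \ref{2thm1.7}.

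First I would run stationary phase on $J_{k-1}(x)=\int_{-1/2}^{1/2}\cos(2\pi(k-1)t-x\sin 2\pi t)\,dt$ (from \eqref{2eq1.47}; the imaginary part integrates away since $k-1\in\mathbb{Z}$). The phase has two nondegenerate stationary points near $t=\pm\tfrac14$ (where $\cos 2\pi t=(k-1)/x$), with second derivative $\asymp x$; as $(k-1)/x\to0$ the Debye phase $\sqrt{x^2-(k-1)^2}-(k-1)\arccos\tfrac{k-1}{x}-\tfrac\pi4$ equals $x-\tfrac\pi4-\tfrac{(k-1)\pi}{2}+\tfrac{(k-1)^2}{2x}+O(K^4/x^3)$, so that
\begin{equation*}
J_{k-1}(x)=\sqrt{\tfrac{2}{\pi x}}\cos\!\Big(x-\tfrac\pi4-\tfrac{(k-1)\pi}{2}+\tfrac{(k-1)^2}{2x}\Big)+O(x^{-3/2}).
\end{equation*}
Since $k-1$ is odd, $i^k\cos(\alpha-\tfrac{(k-1)\pi}{2})=-\sin\alpha$, and the $k$-dependence of the main term collapses onto the factor $e^{i(k-1)^2/(2x)}$. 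Hence the left-hand side of \eqref{2eq1.90} equals $-\sqrt{2/(\pi x)}\,\mathrm{Im}\big[e^{i(x-\pi/4)}\,\mathcal S\big]+O\big(x^{-3/2}\sum_k|J_{2k-1}(y)|\big)$, where $\mathcal S:=\sum_{2\mid k}h(\tfrac{k-K}{H})e^{i(k-1)^2/(2x)}J_{2k-1}(y)$; by \eqref{2eq1.48} the error is $\ll x^{-3/2}HK^{-1/3}\ll K^{-7/3+\varepsilon}$.

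Then I would evaluate $\mathcal S$ with Lemma \ref{2thm1.7}. Put $l=2k-1$ (so $l\equiv3\bmod4$) and $G(l):=h(\tfrac{l+1-2K}{2H})e^{i(l-1)^2/(8x)}$; this is smooth, supported on $l\in[2K-1,2K+2H-1]$, and — because $\tfrac{l-1}{4x}\ll K/x\ll K^{-1+\varepsilon}$ is $\le H^{-1}K^{\varepsilon}$ once $c\le\tfrac14$ — satisfies $G^{(j)}\ll_j(H^{-1}K^{\varepsilon})^{j}$. Applying Lemma \ref{2thm1.7} with $a=3$ to the real and imaginary parts of $G$ gives $\mathcal S=\tfrac14 G(y)-\tfrac14 h_G(y)+O(y\,c_3(G))$, with $h_G(y)=\int_0^\infty G(\sqrt{2yt})\sin(y+t-\tfrac\pi4)(\pi t)^{-1/2}\,dt$. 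Here $G(y)\ne0$ forces $y\asymp K$, so $\sqrt{2/(\pi x)}\,|G(y)|\ll x^{-1/2}\ll K^{-1+\varepsilon}$; the derivative bounds give $c_3(G)\ll H^{-3}K^{O(\varepsilon)}$, so (with $y<K^{2+\varepsilon}$, $H=K^{3/4+c}$) $\sqrt{2/(\pi x)}\,y\,c_3(G)\ll K^{-1+\varepsilon}\cdot K^{2}H^{-3}K^{O(\varepsilon)}=K^{-5/4-3c+\varepsilon}\ll K^{-1+\varepsilon}$. For $h_G(y)$ the substitution $s=\sqrt{2yt}$ turns it into $\sqrt{2/(\pi y)}\int h(\tfrac{s+1-2K}{2H})e^{i\rho(s)}\,ds$ with $\rho(s)=\tfrac{(s-1)^2}{8x}+\tfrac{s^2}{2y}$; on the support $\rho'(s)>0$ (no stationary point) with $\rho'(s)\gg K/\min(x,y)$, $|\rho''(s)|\ll 1/\min(x,y)$, $\rho'''\equiv0$, so one integration by parts (the bump vanishing at the endpoints) bounds the integral by $H\big(\tfrac{1/H}{|\rho'|}+\tfrac{|\rho''|}{|\rho'|^{2}}\big)\ll\min(x,y)/K$, hence by $\min(H,\min(x,y)/K)$. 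Writing $\mu=\min(x,y)$ and using $(xy)^{-1/2}\le\mu^{-1}$ gives $\sqrt{2/(\pi x)}\,|h_G(y)|\ll\mu^{-1}\min(H,\mu/K)\ll K^{-1}$, since $\mu^{-1}(\mu/K)=K^{-1}$ for $\mu\le HK$ and $\mu^{-1}H<(HK)^{-1}H=K^{-1}$ for $\mu>HK$.

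Collecting the estimates, the sum in \eqref{2eq1.90} is $\ll K^{-1+\varepsilon}+K^{-7/3+\varepsilon}+K^{-5/4-3c+\varepsilon}\ll K^{-1+\varepsilon}$, which in particular is $\ll K^{-1+\varepsilon}+HK^{-11/6}$. The main obstacle is the bookkeeping that makes Lemma \ref{2thm1.7} applicable: one must verify that $e^{i(k-1)^2/(2x)}$ really is slowly varying on the scale $H$ (precisely where $c\le\tfrac14$ enters, and borderline at $c=\tfrac14$) and that the $O(x^{-3/2})$ error in the Debye expansion of $J_{k-1}(x)$ is negligible after summation; the bound on $h_G(y)$ is then a short, if slightly fiddly, integration-by-parts argument.
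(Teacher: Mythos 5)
Your proof follows essentially the same route as the paper's: replace $J_{k-1}(x)$ by its Debye asymptotic (valid since $x>K^{2-\varepsilon}\gg k$), observe that the main term is $i^{k}$ times an amplitude of size $x^{-1/2}$ that varies slowly in $k$, cancel the $i^k$, and then evaluate the remaining weighted sum of $J_{2k-1}(y)$ over $k$ by Lemma \ref{2thm1.7}, bounding the Debye error trivially via \eqref{2eq1.48}. The paper does exactly this (citing Khan's Lemma 1.6 for \eqref{2eq1.94} and then invoking Lemma \ref{2thm1.7} in one line); you supply the verification the paper omits, and your sharper error term $O(x^{-3/2})$ even removes the $HK^{-11/6}$ contribution.

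One step is incomplete as written: in bounding $h_G(y)$ you replace $\sin\bigl(y+t-\tfrac{\pi}{4}\bigr)$ by a single exponential, so after the substitution you only consider the phase $\rho_+(s)=\tfrac{(s-1)^2}{8x}+\tfrac{s^2}{2y}$, which indeed has no stationary point. But the sine contributes a second term with phase $\rho_-(s)=\tfrac{(s-1)^2}{8x}-\tfrac{s^2}{2y}$, whose derivative $\tfrac{s-1}{4x}-\tfrac{s}{y}$ vanishes when $y\approx 4x$ (and then $\rho_-''\approx 0$ as well), a range that is admissible since $x>K^{2-\varepsilon}$ and $y<K^{2+\varepsilon}$ overlap there. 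Your integration-by-parts bound therefore does not apply to that term. The gap is harmless: whenever a stationary point of $\rho_-$ lies in the support one has $y\gg x\gg K^{2-\varepsilon}$, so the trivial estimate $|h_G(y)|\le \sqrt{2/(\pi y)}\int|G|\ll Hy^{-1/2}$ already gives $\sqrt{2/(\pi x)}\,|h_G(y)|\ll H(xy)^{-1/2}\ll HK^{-2+\varepsilon}\ll K^{-1+\varepsilon}$; and when $y\le 2x$ one has $|\rho_-'(s)|\gg s/y$ and your integration by parts goes through. With that case added, the argument is complete and yields the stated bound.
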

\begin{proof}
For $u\in (1,2)$, define the following function of $u$:
\begin{equation}
\begin{split}
z(u) = &x\left(1-\frac{(uK-1)^2}{x^2}\right)^{1/2}\\
&+(uK-1){\rm tan}^{-1}\left(\left(\frac{uK-1}{x}\right)\left(1-\frac{(uK-1)^2}{x^2}\right)^{-1/2}\right)\\
=&x+O(K^{\varepsilon})\label{2eq1.91}
\end{split}
\end{equation}
and
\begin{equation}
h_2(u) = \frac{\sin z(u)-\cos z(u)}{\sqrt{z(u)-\frac{(k-1)\pi}{2}}}.\label{2eq1.92}
\end{equation}
\par By \cite[Lemma 1.6]{6} we have
\begin{equation}
\begin{split}
J_{k-1}(x)& = \sqrt{\frac{2}{\pi(z(\frac{k}{K})-\frac{(k-1)\pi}{2})}}{\rm cos}\left(z\left(\frac{k}{K}\right)-\frac{(k-1)\pi}{2}-\frac{\pi}{4}\right)+O(k^{-4/3})\\
&=\frac{i^k}{\sqrt{\pi}}h_2\left(\frac{k}{K}\right)+O(K^{-4/3}).\label{2eq1.94}
\end{split}
\end{equation}
Thus the left hand side of (\ref{2eq1.90}) is bounded by
\begin{equation}
\left|\sum_{k\equiv0\,mod\,2}h\left(\frac{k-K}{H}\right)h_2\left(\frac{k}{K}\right) J_{2k-1}(y)\right|+\frac{1}{K^{4/3}}\sum_{k\equiv0\,mod\,2}h\left(\frac{k-K}{H}\right)|J_{k-1}(y)|.\label{2eq1.95}
\end{equation}
By (\ref{2eq1.91})-(\ref{2eq1.92}) and Lemma \ref{2thm1.7}, the first term above is bounded by $K^{-1+\varepsilon}$.
By (\ref{2eq1.48}), the second term is bounded by $HK^{-11/6}$.
\end{proof}
\section{The main term}
By (\ref{2eq1.23}) and the approximate functional equation (Lemma \ref{2thm1.4}) we have
\begin{equation}
\begin{split}
||F||_4^4 =& \frac{2\pi^3}{(2k-1)L(1,{\rm sym}^2f)^2}\sum_{m,n,r\ge 1}\frac{A_f(n,r)}{(mnr^2)^{1/2}}V_{k,1}(m)V_{k,2}(nr^2)\\
&\times \sum_{g\in B_{2k}}\frac{a_g(m)a_g(n)}{L(1,{\rm sym}^2g)},\label{2eq2.1}
\end{split}
\end{equation}
\par Now by Petersson's trace formula (Lemma \ref{2thm1.3}), we get
\begin{equation}
\begin{split}
||F||_4^4 =&\frac{\pi}{L(1,{\rm sym}^2f)^2}\left(\sum_{n,r\ge 1}\frac{A_f(n,r)V_{k,1}(n)V_{k,2}(nr^2)}{nr} \right.\\
&+ 2\pi\sum_{m,n,r\ge 1}\frac{A_f(n,r)V_{k,1}(m)V_{k,2}(nr^2)}{(mnr^2)^{1/2}}
 \left. \sum_{c\ge 1}\frac{S(n,m;c)}{c}J_{2k-1} \left( \frac{4\pi\sqrt{nm}}{c} \right)  \right).\label{2eq2.2}
\end{split}
\end{equation}
We have
\begin{equation}
\begin{split}
\sum_{n,r\ge 1} &\frac{A_f(n,r)V_{k,1}(n)V_{k,2}(nr^2)}{nr}\\
=&\frac{1}{2\pi i}\int_{(2)}\frac{1}{2\pi i}\int_{(2)}\frac{\Gamma_{k,1}(1/2+s_1)}{\Gamma_{k,1}(1/2)}\frac{\Gamma_{k,2}(1/2+s_2)}{\Gamma_{k,2}(1/2)}\\
&\times \frac{L(1+s_1+s_2,{\rm sym}^2f)L(1+2s_2,{\rm sym}^2f)}{\zeta(2+2s_1+2s_2)}\frac{ds_1}{s_1}\frac{ds_2}{s_2}\label{2eq2.3}
\end{split}
\end{equation}
on using (\ref{2eq1.32}) and the identity
\begin{equation}
\sum_{n,r\ge 1}\frac{A_f(n,r)}{n^sr^w} = \frac{L(s,{\rm sym}^2f)L(w,{\rm sym}^2f)}{\zeta(s+w)}\label{2eq2.4}
\end{equation}
of \cite[Proposition 6.6.3]{2}, valid for $\mathfrak{R}(s), \mathfrak{R}(w)>1$.
We shift the lines of integration in (\ref{2eq2.3}) to $\mathfrak{R}(s_1) =\mathfrak{R}(s_2) = -1/4$, pick up residues at $\mathfrak{R}(s_1) = \mathfrak{R}(s_2) = 0$ and bound the new integral using (\ref{2eq1.32}). The result is that
\begin{equation}
\sum_{n,r\ge 1} \frac{A_f(n,r)V_{k,1}(n)V_{k,2}(nr^2)}{nr} = \frac{6}{\pi^2}L(1,{\rm sym}^2f)^2+O(k^{-1/2}).\label{2eq2.5}
\end{equation}
Thus
\begin{equation}
\begin{split}
||F||_4^4 =& \frac{6}{\pi} + \frac{2\pi^2}{L(1,{\rm sym}^2f)^2}\sum_{m,n,r\ge 1}\frac{A_f(n,r)V_{k,1}(m)V_{k,2}(nr^2)}{(mnr^2)^{1/2}}\\
&\times \sum_{c\ge 1}\frac{S(n,m;c)}{c}J_{2k-1}\left(\frac{4\pi \sqrt{nm}}{c}\right) + O(k^{-1/2}).\label{2eq2.6}
\end{split}
\end{equation}
When (\ref{2eq2.6}) is inserted into (\ref{eq1.6}), we see that the constant $\frac{6}{\pi}$ gives the desired main term of Theorem \ref{thm1.3}.
\section{The error term}
By Lemma \ref{2thm1.5}, the bounds (cf.\cite[(0.8)--(0.10)]{110})
\begin{equation}
k^{-\varepsilon}\ll L(1,{\rm sym}^2f)\ll k^{\varepsilon}\label{2eq1.45}
\end{equation}
and the trivial bound $||F||_4^4\ll k^{1/2+\varepsilon}$, we see that Theorem \ref{thm1.3} would follow from showing that there exists some $\delta>0$ such that for any $1\le d_1,d_2,d_3\le K^{\varepsilon}$, we have
\begin{equation}
\frac{2}{HW}\sum_{k\equiv 0 \,mod\,2}w\left(\frac{k-K}{H}\right)\frac{12}{k}\sum_{f\in B_k}L(1,{\rm sym}^2f)a_f(d_1^2d_2^2)(||F||_4^4-\frac{6}{\pi}) = O(K^{-\delta}).\label{2eq1.46}
\end{equation}
It remains to prove that for some $\delta>0$ and any $1\le d_1,d_2,d_3\le K^{\varepsilon}$, we have
\begin{equation}
\begin{split}
\frac{1}{KH}&\sum_{k \equiv 0\,mod\,2}w\left(\frac{k-K}{H}\right)\sum_{f\in B_k}\frac{1}{L(1,{\rm sym}^2f)}\\
&\times\sum_{m,n,r,c\ge 1}\frac{A_f(n,r)a_f(d_1^2d_2^2)V_{k,1}(m)V_{k,2}(nr^2)}{(mnr^2)^{1/2}}\frac{S(n,m;c)}{c}\\
&\times J_{2k-1}\left(\frac{4\pi\sqrt{nm}}{c}\right) \ll K^{-\delta}.\label{2eq3.1}
\end{split}
\end{equation}
Expressing the GL$(3)$ coefficients in terms of GL$(2)$ coefficients by using (\ref{10eq29}) and (\ref{10eq210}), we get
\begin{equation}
\begin{split}
&\sum_{n,r\ge 1}  \frac{A_f(n,r)V_{k,2}(nr^2)S(n,m;c)J_{2k-1}(4\pi\sqrt{nm}/c)}{n^{1/2}r}\\
&= \sum_{n,r,\alpha \ge 1} \frac{\mu(\alpha)A_f(n,1)A_f(r,1)S(n\alpha,m;c)J_{2k-1}(4\pi\sqrt{n\alpha m}/c)V_{k,2}(nr^2\alpha^3)}{n^{1/2}r\alpha^{3/2}}\\
&= \sum_{n,r,\alpha,\beta,\gamma\ge 1}\frac{\mu(\alpha)a_f(n^2)a_f(r^2)S(n\alpha\beta^2,m;c)J_{2k-1}(4\pi\sqrt{n\alpha\beta^2m}/c)V_{k,2}(nr^2\alpha^3\beta^2\gamma^4)}{n^{1/2}r\alpha^{3/2}\beta\gamma^2}.\label{2eq3.2}
\end{split}
\end{equation}
Finally we can combine $a_f(r^2)$ and $a_f(d_1^2d_2^2)$ using the Hecke relations (\ref{2eq1.11}) to see that to establish (\ref{2eq3.1}), it suffices to prove that
\begin{proposition}\label{2thm3.1}
For some $\delta>0$ and any integers $\alpha$,$\beta$,$\gamma$,$r_1$,$r_2\ge 1$ such that $K^{-\varepsilon} < r_1/r_2<K^{\varepsilon}$, we have
\begin{equation}
\begin{split}
\frac{1}{KH}&\sum_{k \equiv 0\,mod\,2}w\left(\frac{k-K}{H}\right)\sum_{f\in B_k}\frac{1}{L(1,{\rm sym}^2f)}\sum_{n,m,c\ge 1}\frac{a_f(n^2)a_f(r_1^2)}{(mn)^{1/2}}\\
&\times \frac{S(n\alpha\beta^2,m;c)}{c} J_{2k-1}\left(\frac{4\pi\sqrt{nm\alpha\beta^2}}{c}\right)V_{k,1}(m)V_{k,2}(nr_2^2\alpha^3\beta^2\gamma^2) \ll K^{-\delta}.\label{2eq3.3}
\end{split}
\end{equation}
\end{proposition}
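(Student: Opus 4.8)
We insert the Petersson trace formula (Lemma~\ref{2thm1.3}) once more, now into the sum over $f\in B_k$; applied with arguments $n^2$ and $r_1^2$ it replaces $\sum_{f\in B_k}a_f(n^2)a_f(r_1^2)/L(1,{\rm sym}^2f)$ by $\tfrac{k-1}{2\pi^2}\big(\delta_{n=r_1}+2\pi i^{-k}\sum_{c_2\ge1}c_2^{-1}S(n^2,r_1^2;c_2)J_{k-1}(4\pi nr_1/c_2)\big)$, and since $H=K^{3/4+c}\le K$ we have $k\asymp K$ on $[K,K+H]$. The diagonal $n=r_1$ is dealt with first: interchanging summations it becomes $\tfrac1{KH\sqrt{r_1}}\sum_{m,c}\tfrac{S(r_1\alpha\beta^2,m;c)}{c\sqrt m}$ times a $k$-average of a single $J$-Bessel function, which Lemma~\ref{2thm1.7} evaluates; the main term is supported on $x=4\pi\sqrt{r_1m\alpha\beta^2}/c\asymp K$, and estimating it, the secondary term, and the error $O(x\,c_3)$ by Weil's bound \eqref{2eq1.16} — reindexing $m$ to absorb the $\gcd$ boost, and noting that when $r_1\alpha\beta^2\gg K$ the near-diagonal terms $m\approx r_1\alpha\beta^2$ themselves gain $K^{-1/2}$ — yields $O(K^{-\delta})$, the bound using $H\ge K^{3/4+c}$. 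It therefore remains to bound the off-diagonal term
\[
\frac1{KH}\sum_{n,m,c,c_2\ge1}\frac{S(n\alpha\beta^2,m;c)\,S(n^2,r_1^2;c_2)}{(mn)^{1/2}cc_2}\,T(n,m,c,c_2),
\]
\[
T(n,m,c,c_2):=\sum_{2\mid k}i^{k}w\Big(\tfrac{k-K}{H}\Big)\tfrac{k-1}{2\pi^2}V_{k,1}(m)V_{k,2}(nr_2^2\alpha^3\beta^2\gamma^2)J_{k-1}\Big(\tfrac{4\pi nr_1}{c_2}\Big)J_{2k-1}\Big(\tfrac{4\pi\sqrt{nm\alpha\beta^2}}{c}\Big)
\]
(using $i^{-k}=i^{k}$ for $2\mid k$), where, by the support of the $V$'s and by \eqref{2eq1.48}, one may assume $n\ll K^{2+\varepsilon}$, $m\ll K^{1+\varepsilon}$, $x:=nr_1/c_2\gg K$ and $y:=\sqrt{nm\alpha\beta^2}/c\gg K$.

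The weighted double Bessel sum $T$ is estimated by the Bessel lemmas above together with their short-interval refinements (Lemma~\ref{2thm3.5} and the surrounding remarks), the slowly varying weight $\tfrac{k-1}{2\pi^2}V_{k,1}V_{k,2}\asymp K$ being absorbed into them. According to the sizes of $x,y$: if $x>K^{2-\varepsilon}$, Lemma~\ref{2thm1.10} gives $T\ll K^{\varepsilon}+HK^{-5/6}$; if $x\le K^{4/3-\varepsilon}$, or $K^{4/3-\varepsilon}<x<K^{2-\varepsilon}$ with $y\notin(xK^{-\varepsilon},xK^{\varepsilon})$, Lemma~\ref{2thm1.8} supplies the explicit oscillatory expansion \eqref{2eq1.57} plus $O(K^{-A})$; and if $K^{4/3-\varepsilon}<x<K^{2-\varepsilon}$ with $y\in(xK^{-\varepsilon},xK^{\varepsilon})$, Lemma~\ref{2thm1.9} gives $T\ll KH(xy)^{-1/2}$, improved to $T\ll_B x\big(x^2/(K^2|4x-y|)\big)^{B}$ once $|1-y/4x|>K^{2+\varepsilon}/x^2$. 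In the ranges governed by Lemmas~\ref{2thm1.9} and~\ref{2thm1.10} one inserts the bound for $T$ and sums over $n,m,c,c_2$ using \eqref{2eq1.16} and Deligne's bound \eqref{2eq1.12} with the truncations in force: the sub-range $|1-y/4x|>K^{2+\varepsilon}/x^2$ contributes $O(K^{-A})$ on taking $B$ large, whereas the range of Lemma~\ref{2thm1.10} and the complementary thin range of Lemma~\ref{2thm1.9} — in which, for each $n,c,c_2$, $m$ is confined to an interval of length $\asymp K^{2+\varepsilon}/x$ around the solution of $y=4x$ — are $O(K^{-\delta})$, the hypothesis $H\ge K^{3/4+c}$ being precisely what absorbs the loss.

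The step I expect to be the main obstacle is the contribution of the oscillatory main term \eqref{2eq1.57} of Lemma~\ref{2thm1.8}: one must show that, summed against the two Kloosterman sums, it yields no spurious secondary main term and is $O(K^{-\delta})$ in total. Here the phase is $\tfrac{y^2}{4x}+2x=\tfrac{m\alpha\beta^2 c_2}{4r_1c^2}+\tfrac{2nr_1}{c_2}$. I would first perform the $n$-sum: opening $S(n^2,r_1^2;c_2)$ into additive characters, completing the square in $n$, and applying Poisson summation modulo $c_2$. On the support of $H^{\pm}$ the $n$-dependent amplitude — built from $V_{k,2}$, $x^{-1/2}$, the factor $1\pm\frac{iy}{4x\sqrt{1-(y/4x)^2}}$, and $H^{\pm}$ — varies slowly on a scale exceeding $c_2$, so only the zero Poisson frequency survives and the $n$-sum collapses to classical quadratic Gauss sums $G(b,0;c_2)$; summing these over $b\bmod c_2$ gives zero unless $c_2$ is a perfect square, leaving a sparse residual sum that \eqref{2eq1.16} and the thin support \eqref{2eq1.59} control. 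Symmetrically, the phase $e\big(m\alpha\beta^2 c_2/(4r_1c^2)\big)$ permits a Poisson summation of the $m$-sum against $S(n\alpha\beta^2,m;c)$. Carrying out this (double) Poisson analysis and checking that, under $H\ge K^{3/4+c}$, every resulting term is $O(K^{-\delta})$ is the heart of the proof.
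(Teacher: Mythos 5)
Your decomposition is exactly the paper's: a second application of Petersson (Lemma \ref{2thm1.3}) in the $f$-aspect produces the diagonal piece $\mathcal{E}_1$ of \eqref{2eq3.5} and the off-diagonal piece $\mathcal{E}_2$ of \eqref{2eq3.6}, and your three ranges for $x=nr_1/c_2$ and $y=\sqrt{nm\alpha\beta^2}/c_1$ coincide with the paper's splitting $\mathcal{E}_2=E_1+E_2+E_3$, handled respectively by Lemma \ref{2thm1.10}, Lemma \ref{2thm1.9} (with the near/far split about the transition point $y=4x$), and Lemma \ref{2thm1.8}. Your treatment of the diagonal and of the $E_1$, $E_2$ ranges matches Lemmas \ref{2thm3.2}--\ref{2thm3.4} in outline, including the observation that the diagonal costs $K^{3/4+\varepsilon}/H$ and so forces $H\ge K^{3/4+c}$.

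The genuine gap is in the range where Lemma \ref{2thm1.8} applies (the paper's $E_3$), which you yourself flag as ``the heart of the proof'' and leave as a sketch. Two problems. First, Lemma \ref{2thm1.8} is stated for a weight $h$ supported on $(1,2)$ with \emph{bounded} derivatives; the short-interval weight $w((k-K)/H)$ corresponds to $h(u)=w((u-1)K/H)$, whose $j$-th derivative is of size $(K/H)^j$. So the expansion \eqref{2eq1.57} and the subsequent Poisson/stationary-phase analysis cannot be invoked as stated, and your key heuristic --- that the $n$-dependent amplitude built from $H^{\pm}$ ``varies slowly on a scale exceeding $c_2$, so only the zero Poisson frequency survives'' --- is precisely what fails for a weight localized at scale $H/K$. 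This is the content of Remark \ref{3remark1}: the degraded derivative bounds are what make power saving hard here and are the actual source of the restriction $H=K^{3/4+c}$. The paper instead bounds $E_3$ by importing the full-interval result of Khan (Lemma 3.5 of \cite{6}) and paying the derivative loss as a factor $K/H$, giving $E_3\ll K^{-1/4+\varepsilon}\cdot K/H$ as in \eqref{eq3.23}; your write-up never produces a quantitative bound for this range, nor explains how the saving survives the $(K/H)^j$ loss. Second, even granting bounded derivatives, the Gauss-sum computation (vanishing unless $c_2$ is a square, control of the residual sparse sum against the thin support \eqref{2eq1.59}, and the symmetric Poisson step in $m$) is asserted rather than carried out, so no $K^{-\delta}$ is actually established for this piece. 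Until that range is closed --- either by executing the Poisson analysis with the correct $(K/H)^j$-dependence or by explicitly reducing to Khan's Lemma 3.5 with the $K/H$ loss --- the proof of the Proposition is incomplete.
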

\par Since by Proposition \ref{2thm3.1}, we have
\begin{equation}
\begin{split}
\frac{1}{KH}&\sum_{k \equiv 0\,mod\,2}w\left(\frac{k-K}{H}\right)\sum_{f\in B_k}\frac{1}{L(1,{\rm sym}^2f)}\\
&\times\sum_{m,n,r,c\ge 1}\frac{A_f(n,r)a_f(d_1^2d_2^2)V_{k,1}(m)V_{k,2}(nr^2)}{(mnr^2)^{1/2}}\frac{S(n,m;c)}{c} J_{2k-1}\left(\frac{4\pi\sqrt{nm}}{c}\right)\\
=&\frac{1}{KH}\sum_{k \equiv 0\,mod\,2}w\left(\frac{k-K}{H}\right)\sum_{f\in B_k}\frac{1}{L(1,{\rm sym}^2f)}
\sum_{m\ge 1}\sum_{c\ge 1}\frac{a_f(d_1^2d_2^2)V_{k,1}(m)}{m^{1/2}c}\\
& \sum_{n,r,\alpha,\beta,\gamma\ge 1}\frac{\mu(\alpha)a_f(n^2)a_f(r^2)S(n\alpha\beta^2,m;c)J_{2k-1}(4\pi\sqrt{n\alpha\beta^2m}/c)V_{k,2}(nr^2\alpha^3\beta^2\gamma^4)}{n^{1/2}r\alpha^{3/2}\beta\gamma^2}\\
=&\sum_{\alpha,\beta,\gamma\ge 1}\frac{\mu(\alpha)}{\alpha^{3/2}\beta\gamma^2}\sum_{r\ge 1}\frac{1}{r}\sum_{d|(d_1^2d_2^2,r^2)}\\
&\frac{1}{KH}\sum_{k \equiv 0\,mod\,2}w\left(\frac{k-K}{H}\right)\sum_{f\in B_k}\frac{1}{L(1,{\rm sym}^2f)}\sum_{n,m,c\ge 1}\frac{a_f(n^2)}{(mn)^{1/2}}a_f\left(\frac{d_1^2d_2^2r^2}{d^2}\right)\\
&\times \frac{S(n\alpha\beta^2,m;c)}{c} J_{2k-1}\left(\frac{4\pi\sqrt{nm\alpha\beta^2}}{c}\right)V_{k,1}(m)V_{k,2}(nr_2^2\alpha^3\beta^2\gamma^4)\\
\ll &\sum_{\alpha,\beta,\gamma\le K^{1+\varepsilon}}\frac{1}{\alpha^{3/2}\beta\gamma^2}\sum_{r\le K^{2+\varepsilon}}\frac{1}{r}\sum_{d|(d_1^2d_2^2,r^2)}K^{-\delta}\\
\ll & K^{-\delta}.\label{3eqb}
\end{split}
\end{equation}
\par Now we will prove Proposition \ref{2thm3.1}.
Denote the left hand side of (\ref{2eq3.3}) by ${\rm ET}$.
By Petersson's trace formula (Lemma \ref{2thm1.3}), we have
\begin{equation}
{\rm ET} = \mathcal{E}_1 + \mathcal{E}_2,\label{2eq3.4}
\end{equation}
where
\begin{equation}
\begin{split}
\mathcal{E}_1 = &\frac{1}{2\pi^2H}\sum_{k\equiv 0\,mod\,2}w\left(\frac{k-K}{H}\right)\frac{k-1}{K}\sum_{m,c\ge 1}\frac{S(r_1\alpha\beta^2,m;c)}{(mr_1)^{1/2}c}\\
&\times J_{2k-1}\left(\frac{4\pi\sqrt{r_1m\alpha\beta^2}}{c}\right)V_{k,1}(m)V_{k,2}(r_1r_2^2\alpha^3\beta^2\gamma^2).\label{2eq3.5}
\end{split}
\end{equation}
and
\begin{equation}
\begin{split}
\mathcal{E}_2 = & \frac{1}{\pi H}\sum_{k\equiv 0\,mod\,2}i^kw\left(\frac{k-K}{H}\right)\frac{k-1}{K}\sum_{n,m,c_1,c_2\ge 1}\frac{1}{(nm)^{1/2}}\frac{S(n^2,r_1^2;c_2)}{c_2}\\
&\times \frac{S(n\alpha\beta^2,m;c_1)}{c_1}J_{k-1}\left(\frac{4\pi nr_1}{c_2}\right)J_{2k-1}\left(\frac{4\pi \sqrt{nm\alpha\beta^2}}{c_1}\right)\\
&\times V_{k,1}(m)V_{k,2}(nr_2^2\alpha^3\beta^2\gamma^2).\label{2eq3.6}
\end{split}
\end{equation}
We split $\mathcal{E}_2$ further into three pieces as in \cite{6}
\begin{equation}
\mathcal{E}_2 = E_1 + E_2 + E_3,\label{2eq3.7}
\end{equation}
where $E_1$ consists of those terms of $\mathcal{E}_2$ with
\begin{equation}
\frac{nr_1}{c_2} >K^{2-\varepsilon},\label{2eq3.8}
\end{equation}
$E_2$ consists of those terms with
\begin{equation}
K^{4/3-\varepsilon} < \frac{nr_1}{c_2} < K^{2-\varepsilon}\quad and  \quad \frac{nr_1}{c_2K^{\varepsilon}}<\frac{\sqrt{nm\alpha\beta^2}}{c_1}<\frac{nr_1K^{\varepsilon}}{c_2},\label{2eq3.9}
\end{equation}
and $E_3$ is the rest of $\mathcal{E}_2$.
\par We first deal with $\mathcal{E}_1$.
\begin{lemma}\label{2thm3.2}
For part $\mathcal{E}_1$, we have
\begin{equation}
\mathcal{E}_1\ll K^{-\delta}.\label{2eq3.10}
\end{equation}
\end{lemma}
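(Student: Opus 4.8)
The plan is to view $\mathcal{E}_1$ as a sum over $m$ and $c$ of a Kloosterman sum against the archimedean average $\sum_{2\mid k}w\!\left(\frac{k-K}{H}\right)\frac{k-1}{K}V_{k,1}(m)V_{k,2}(r_1r_2^2\alpha^3\beta^2\gamma^2)J_{2k-1}\!\left(\frac{4\pi\sqrt{r_1m\alpha\beta^2}}{c}\right)$, and to evaluate this average with Lemma \ref{2thm1.7}. I would start with reductions. By (\ref{2eq1.32}) the factor $V_{k,1}(m)$ restricts $m\ll K^{1+\varepsilon}$; the factor $V_{k,2}(r_1r_2^2\alpha^3\beta^2\gamma^2)$ is either $\ll K^{-100}$, so that $\mathcal{E}_1$ is negligible (it appears as an overall constant), or else $r_1r_2^2\alpha^3\beta^2\gamma^2\ll K^{2+\varepsilon}$, and I keep the genuine decay $\Phi:=(1+m/K)^{-A}\bigl(1+r_1r_2^2\alpha^3\beta^2\gamma^2/K^2\bigr)^{-A}$ for later use. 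Since $J_l(x)\ll e^{-l}$ for $x<l/10$ by (\ref{2eq1.48}), the terms with $c\gg\sqrt{r_1m\alpha\beta^2}/K$ contribute $O(K^{-100})$, so I may assume $c\ll\sqrt{r_1m\alpha\beta^2}/K\ll K^{1/2+\varepsilon}$, whence $K\ll x\ll K^{3/2+\varepsilon}$ for $x:=4\pi\sqrt{r_1m\alpha\beta^2}/c$. Putting $l=2k-1$ (so ``$k$ even'' becomes $l\equiv 3\pmod 4$) and using (\ref{2eq1.33}) to bound the $k$-derivatives of the $V$-factors, the whole $k$-weight is absorbed into one smooth, real function $g$, supported on an interval of length $\asymp H$ near $2K$, with $g^{(j)}\ll_j H^{-j}\Phi$. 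Lemma \ref{2thm1.7} with $a=-1$ then evaluates the inner average as $\frac14\bigl(g(x)-h(x)+O(x\,c_3(g))\bigr)$, where running the partial-integration argument of the Remark after Lemma \ref{2thm1.7}, now with a support of length $\asymp H$, yields $c_3(g)\ll H^{-3}\Phi$ and $h(x)\ll\frac{H}{\sqrt x}\bigl(\frac{x}{HK}\bigr)^A\Phi$ for all $A$. Thus $\mathcal{E}_1$ becomes a $g(x)$-term, an $h(x)$-term, and an error term.

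The $h(x)$- and error terms should be negligible. As $x\ll K^{3/2+\varepsilon}$ and $H=K^{3/4+c}$, we have $x/HK\ll K^{-1/4-c+\varepsilon}$, so $h(x)\ll HK^{-A(1/4+c)+\varepsilon}\Phi$; taking $A$ large defeats the polynomially bounded sums over $m\ll K^{1+\varepsilon}$ and $c\ll K^{1/2+\varepsilon}$ (finite after Weil's bound (\ref{2eq1.16}) and $\sum_c c^{-1/2+\varepsilon}$), giving $O(K^{-100})$. For the error term, $x\,c_3(g)\ll\sqrt{r_1m\alpha\beta^2}\,c^{-1}H^{-3}\Phi$, so using (\ref{2eq1.16}) in the form $|S(r_1\alpha\beta^2,m;c)|\ll c^{1/2+\varepsilon}(r_1\alpha\beta^2,m)^{1/2}$ the $c$-sum is $\ll\sum_c c^{-3/2+\varepsilon}=O(1)$, and an elementary divisor estimate for $\sum_{m\ll K^{1+\varepsilon}}(r_1\alpha\beta^2,m)^{1/2}$ leaves $\ll K^{1+\varepsilon}H^{-4}(\alpha\beta^2)^{1/2}\Phi\ll K^{-1-4c+\varepsilon}$ uniformly, the $\Phi$-constraint keeping $(\alpha\beta^2)^{1/2}\ll K^{1+\varepsilon}$.

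The decisive piece is the $g(x)$-term $\frac{1}{8\pi^2H}\sum_{m,c}\frac{S(r_1\alpha\beta^2,m;c)}{(mr_1)^{1/2}c}\,g(x)$. Here $g(x)\ne 0$ forces $x\in[2K,2K+2H]$, so $c$ lies in an interval of length $\ll\sqrt{r_1m\alpha\beta^2}\,H/K^2$ about $C_0:=\sqrt{r_1m\alpha\beta^2}/K$; for each $m$ there are $\ll\max(1,C_0H/K)$ admissible $c$, all $\asymp C_0$. Inserting (\ref{2eq1.16}), so that $|S|/c\ll C_0^{-1/2+\varepsilon}(r_1\alpha\beta^2,m)^{1/2}$, together with the elementary bounds $\sum_{m\le M}m^{-a}(D,m)^{1/2}\ll M^{1-a}D^{\varepsilon}$ and $\sum_{m\le M}m^{-a}(D,m)\ll M^{1-a}D^{\varepsilon}$ for $0<a<1$ at $M=K^{1+\varepsilon}$, I expect the two regimes to give $\frac1H|\Sigma_g|\ll K^{-3/4+\varepsilon}\alpha^{1/4}\beta^{1/2}r_1^{-1/4}\Phi$ when $C_0H/K\ge 1$ and $\frac1H|\Sigma_g|\ll K^{3/4+\varepsilon}H^{-1}\alpha^{-1/4}\beta^{-1/2}r_1^{-3/4}\Phi$ when $C_0H/K<1$. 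With $H=K^{3/4+c}$ the second is $\ll K^{-c+\varepsilon}$; in the first, the $\Phi$-decay forces $\alpha^{1/4}\beta^{1/2}\ll K^{1/2+\varepsilon}$, making it $\ll K^{-1/4+\varepsilon}\ll K^{-c+\varepsilon}$ since $c\le 1/4$. Hence $\mathcal{E}_1\ll K^{-c+\varepsilon}$, which establishes (\ref{2eq3.10}) for any fixed $\delta<c$ and, since the weights in (\ref{3eqb}) sum to $\ll K^{\varepsilon}$, is compatible with the subsequent summation over $\alpha,\beta,\gamma,r_1,r_2$. The main obstacle is exactly this $g(x)$-term: it is the only contribution that is not automatically tiny, being of genuine size $\asymp K^{3/4}/H$, so a power saving needs the shortness of the $c$-window from $g(x)\ne 0$, the square-root cancellation of Weil's bound, and the cutoff $m\ll K^{1+\varepsilon}$ to all cooperate, and one must push the gcd factors $(r_1\alpha\beta^2,m)$ through the divisor sums carefully to keep the bound uniform in $\alpha,\beta,\gamma,r_1,r_2$; this is also the point at which the lower threshold for $H$ is used.
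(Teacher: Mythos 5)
Your argument is correct and follows essentially the same route as the paper: truncate to $m\ll K^{1+\varepsilon}$ and $c\ll K^{1/2+\varepsilon}$ via the decay of $V_{k,j}$ and of $J_l(x)$ for $x<l/10$, evaluate the $k$-average of $J_{2k-1}$ with Lemma \ref{2thm1.7} (with the $h$- and $c_3(g)$-terms negligible by partial integration), and control the surviving $g$-term with Weil's bound, arriving at $K^{3/4+\varepsilon}/H\ll K^{-\delta}$. Your extra localization of $c$ to the short window where $g(x)\neq 0$ is a refinement the paper does not use (it simply bounds $g\ll 1$ and sums over all $c\le K^{1/2+\varepsilon}$), but it only sharpens one regime and does not change the mechanism or the threshold $H=K^{3/4+c}$.
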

\begin{proof}
By using Weil's bound (\ref{2eq1.16}) and (\ref{2eq1.33}) and spliting the sum of $c_1$ into two pieces, we have
\begin{equation}
\begin{split}
\mathcal{E}_1\ll  \frac{K^{\varepsilon}}{H}&\sum_{c\ge 1, m\le K^{1+\varepsilon}}\frac{|S(r_1\alpha \beta^2,m;c)|}{m^{\frac{1}{2}}c}\left|\sum_{k\equiv 0\,mod \,2}w\left(\frac{k-K}{H}\right)J_{2k-1}\left(\frac{4\pi\sqrt{r_1m\alpha\beta^2}}{c}\right)\right|\\
= \frac{K^{\varepsilon}}{H}&\left(\sum_{c\le K^{\frac{1}{2}+\varepsilon}, m\le K^{1+\varepsilon}} + \sum_{c\ge K^{\frac{1}{2}+\varepsilon}, m\le K^{1+\varepsilon}} \right)\\
&  \frac{|S(r_1\alpha \beta^2,m;c)|}{m^{\frac{1}{2}}c}\left|\sum_{k\equiv 0\,mod \,2}w\left(\frac{k-K}{H}\right)J_{2k-1}\left(\frac{4\pi\sqrt{r_1m\alpha\beta^2}}{c}\right)\right|.\label{eq3.12}
\end{split}
\end{equation}
By Lemma \ref{2thm1.7}, the first part of (\ref{eq3.12}) is bounded by
\begin{equation}
\begin{split}
\frac{K^{\varepsilon}}{H}&\sum_{c_1\le K^{\frac{1}{2}+\varepsilon}, m\le K^{1+\varepsilon}}\frac{|S(r_1\alpha \beta^2,m;c)|}{m^{\frac{1}{2}}c}\\
&\ll \frac{K^{\varepsilon}}{H}\sum_{c_1\le K^{\frac{1}{2}+\varepsilon}, m\le K^{1+\varepsilon}}\frac{1}{(mc)^{1/2}}\sum_{d|c,d|m}d^{1/2}\\
&\ll \frac{K^{\varepsilon}}{H}\sum_{c\le K^{\frac{1}{2}+\varepsilon}}\frac{1}{c^{1/2}}\sum_{d|c}d^{1/2}\sum_{m\le K^{1+\varepsilon},d|m}\frac{1}{m^{1/2}}\ll \frac{K^{3/4+\varepsilon}}{H}\ll K^{-\delta},\label{eq3.13}
\end{split}
\end{equation}
the second part of (\ref{eq3.12}) is negligible since by partial integration we have (\ref{eq1.36}).
\end{proof}

\begin{remark}\label{3remark4}
Lemma \ref{2thm3.2} is valid only for $H = K^{\frac{3}{4}+c}$, $0<c\le 1/4$ if we apply Weil's bound directly.
However we can show Lemma \ref{2thm3.2} is valid for $H = K^{\frac{1}{2}+c}$, $0<c\le 1/2$ by applying Possion summation formula for the sum of $m$ as follows.
\par Similar to the proof of Lemma \ref{2thm3.2}, we have
\begin{equation}
\begin{split}
\mathcal{E}_1 = &\frac{1}{2\pi^2H}\sum_{k\equiv 0\,mod\,2}w\left(\frac{k-K}{H}\right)\frac{k-1}{K}\sum_{m,c\ge 1}\frac{S(r_1\alpha\beta^2,m;c)}{(mr_1)^{1/2}c}\\
&\times J_{2k-1}\left(\frac{4\pi\sqrt{r_1m\alpha\beta^2}}{c}\right)V_{k,1}(m)V_{k,2}(r_1r_2^2\alpha^3\beta^2\gamma^2).\\
=& \frac{1}{2\pi^2H} \sum_{m\le K^{1+\varepsilon},c \le K^{1/2+ \varepsilon}} \frac{S(r_1\alpha\beta^2,m;c)}{(mr_1)^{1/2}c} \\
& \times \sum_{k\equiv 0\,mod\,2}w\left(\frac{k-K}{H}\right)J_{2k-1}\left(\frac{4\pi\sqrt{r_1m\alpha\beta^2}}{c}\right) + O(K^{-\delta}). \label{3eqa}
\end{split}
\end{equation}
By Lemma \ref{2thm1.7}, we have
\begin{equation}
\begin{split}
\mathcal{E}_1 =&  \frac{1}{8\pi^2H} \sum_{m\le K^{1+\varepsilon},c \le K^{1/2+ \varepsilon}} \frac{S(r_1\alpha\beta^2,m;c)}{(mr_1)^{1/2}c} \\
& \times w\left(\frac{4\pi\sqrt{r_1m\alpha\beta^2}-(2K-1)c}{2Hc}\right) + O(K^{-\delta}). \label{3eqa2}
\end{split}
\end{equation}
Opening the Kloosterman sum, we get
\begin{equation}
\begin{split}
\mathcal{E}_1 =&  \frac{1}{8\pi^2Hr_1^{1/2}} \sum_{c \le K^{1/2+ \varepsilon}} \frac{1}{c} {\sum_{x \, mod\, c}}^{*}e\left(\frac{r_1\alpha\beta^2\bar{x}}{c}\right)\sum_{m\le K^{1+\varepsilon}}\frac{1}{m^{1/2}}\\
& \times w\left(\frac{4\pi\sqrt{r_1m\alpha\beta^2}-(2K-1)c}{2Hc}\right)e\left(\frac{mx}{c}\right)+ O(K^{-\delta}). \label{3eqa3}
\end{split}
\end{equation}
Let $\phi$ be a smooth function compactly support on $[1,K]$, applying Possion summation formula for the sum of $m$, we have
\begin{equation}
\begin{split}
\mathcal{E}_1 =&  \frac{1}{8\pi^2Hr_1^{1/2}} \sum_{c \le K^{1/2+ \varepsilon}} \frac{1}{c} {\sum_{x \, mod\, c}}^{*}e\left(\frac{r_1\alpha\beta^2\bar{x}}{c}\right)\sum_{n \in \mathbb{Z}}\int_{-\infty}^{\infty}\frac{\phi(y)}{y^{1/2}}\\
& \times w\left(\frac{4\pi\sqrt{r_1y\alpha\beta^2}-(2K-1)c}{2Hc}\right)e\left((\frac{x}{c}+n)y\right)dy+ O(K^{-\delta}). \label{3eqa4}
\end{split}
\end{equation}
By partial integration, the integral of $m$ is $\ll \frac{1}{n^2}$, then $\mathcal{E}_1 \ll \frac{K^{1/2}}{H}+K^{-\delta}$.
So Lemma \ref{2thm3.2} is valid for $H = K^{\frac{1}{2}+c}$, $0<c\le 1/2$.
\end{remark}

\par As for $\mathcal{E}_2$ we first show the following lemma.
\begin{lemma}\label{2thm3.3}
For part $E_1$, we have
\begin{equation}
E_1\ll K^{-\delta}.\label{2eq3.12}
\end{equation}
\end{lemma}
\begin{proof}
By the definition of $E_1$, we have $nr_1<K^{2+\varepsilon}$, $c_2 < K^{\varepsilon}$.
By using Weil's bound (\ref{2eq1.16}) and (\ref{2eq1.33}) and spliting the sum of $c_1$ for two pieces, we have
\begin{equation}
\begin{split}
E_1 \ll  \frac{K^{3/2}}{H} &\sum_{c_1\ge 1}\frac{1}{c_1^{\frac{1}{2}}}\left|\sum_{k\equiv 0\,mod\,2}i^kw\left(\frac{k-K}{H}\right)J_{k-1}\left(\frac{4\pi nr_1}{c_2}\right)J_{2k-1}\left(\frac{4\pi \sqrt{nm\alpha\beta^2}}{c_1}\right)\right|\\
= \frac{K^{3/2}}{H}& \left(\sum_{c_1\le 4\pi K^{1/2+\varepsilon}} + \sum_{c_1 > 4\pi K^{1/2+\varepsilon}} \right)\\
 & \frac{1}{c_1^{\frac{1}{2}}}\left|\sum_{k\equiv 0\,mod\,2}i^kw\left(\frac{k-K}{H}\right)J_{k-1}\left(\frac{4\pi nr_1}{c_2}\right)J_{2k-1}\left(\frac{4\pi \sqrt{nm\alpha\beta^2}}{c_1}\right)\right|.\label{eq3.16}
\end{split}
\end{equation}
By Lemma \ref{2thm1.10}, the first part of (\ref{eq3.16}) is bounded by $K^{-\delta}$.
By (\ref{2eq1.48}) and (\ref{eq1.33}), the second part of (\ref{eq3.16}) is bounded by $(\frac{e}{4})^{2K}K^{-\frac{3}{4}}$.
\end{proof}
\begin{remark}\label{3remark3}
Lemma \ref{2thm3.3} is valid only for $H = K^{\frac{3}{4}+c}$, $0<c\le 1/4$, we can make Lemma \ref{2thm3.3} be valid for $H = K^{\frac{1}{2}+c}$, $0<c\le 1/2$ by using the same method as the Remark \ref{3remark4}.
\end{remark}
\par For the second piece we have the following lemma.
\begin{lemma}\label{2thm3.4}
For part $E_2$, we have
\begin{equation}
E_2\ll K^{-\delta}.\label{2eq3.15}
\end{equation}
\end{lemma}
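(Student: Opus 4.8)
The plan is to estimate the inner sum over $k$ in $E_2$ by Lemma~\ref{2thm1.9}, splitting the range of summation according to whether the two Bessel arguments $x:=4\pi n r_1/c_2$ and $y:=4\pi\sqrt{nm\alpha\beta^2}/c_1$ are in resonance. First I would view $E_2$ as the part of \eqref{2eq3.6} supported on the range \eqref{2eq3.9}, and absorb $\tfrac{k-1}{K}$, $V_{k,1}(m)$ and $V_{k,2}(nr_2^2\alpha^3\beta^2\gamma^2)$ into the weight: for each fixed $n,m,\dots$ the product $w(t)\tfrac{K+Ht-1}{K}V_{K+Ht,1}(m)V_{K+Ht,2}(nr_2^2\alpha^3\beta^2\gamma^2)$ is a smooth function of $t$ supported on $(0,1)$ whose derivatives are bounded uniformly in $t$ by a rapidly decreasing function of $m/K$ and $nr_2^2\alpha^3\beta^2\gamma^2/K^2$ (by \eqref{2eq1.33} and $H\le K$); this confines the sum to $m\ll K^{1+\varepsilon}$, $nr_2^2\alpha^3\beta^2\gamma^2\ll K^{2+\varepsilon}$, and hence forces $y\asymp x\ll K^{3/2+\varepsilon}$ on $E_2$.

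Next I would split $E_2=E_2^{\mathrm{nr}}+E_2^{\mathrm{res}}$, where $E_2^{\mathrm{res}}$ collects the terms with $|4x-y|$ below a threshold of size $\asymp K^{2+\varepsilon}/x$ when $x\ll K^{4/3}$ and $\asymp x^2K^{\varepsilon}/K^{2}$ when $x\gg K^{4/3}$, and $E_2^{\mathrm{nr}}$ is the rest. On $E_2^{\mathrm{nr}}$ the hypothesis $|1-y/(4x)|>K^{2+\varepsilon}/x^2$ of Lemma~\ref{2thm1.9} is satisfied (since $x>K^{4/3-\varepsilon}$), and the threshold is chosen so that $x^2/(K^2|4x-y|)\ll K^{-\varepsilon}$; taking $B$ large in \eqref{2eq1.86} then makes the $k$-sum $\ll_A K^{-A}$, and, after inserting Weil's bound \eqref{2eq1.16} and summing the absolutely convergent sums over $n,m,c_1,c_2$, we get $E_2^{\mathrm{nr}}\ll_A K^{-A}$.

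The resonant part $E_2^{\mathrm{res}}$ is the heart of the matter. Here I would use the trivial bound \eqref{2eq1.84}, so the $k$-sum is $\ll H/\sqrt{xy}\asymp H/x$ (the $1/H$ in \eqref{2eq3.6} cancelling). The gain comes from the resonance: on squaring, $y\approx 4x$ becomes the near-equality $16\,n r_1^2 c_1^2\approx m\alpha\beta^2 c_2^2$, which for fixed $n,c_1,c_2$ confines $m$ to an interval shorter than the trivial range $m\ll K^{1+\varepsilon}$ by a power of $K$ (roughly $K^{2/3}$ when $x\asymp K^{4/3}$, decreasing to $K^{1/2}$ when $x\asymp K^{3/2}$). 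I would then insert \eqref{2eq1.16} for $S(n^2,r_1^2;c_2)$ (using $(n^2,r_1^2,c_2)^{1/2}\le(n,r_1)$) and for $S(n\alpha\beta^2,m;c_1)$, estimate the resulting $\gcd$-sums by the divisor bound, and perform a dyadic decomposition in the sizes $N,M$ of $n,m$ and in the common size $P$ of the two Bessel arguments (which pins down the sizes of $c_1$ and $c_2$). Keeping the length of the confined $m$-interval tied to the scale $P$ — rather than bounding it in the worst case — the two factors $c_1^{1/2+\varepsilon}$ and $c_2^{1/2+\varepsilon}$ coming from Weil's bound are balanced against this confinement, and one obtains $E_2^{\mathrm{res}}\ll K^{-1/12+\varepsilon}$, uniformly in $\alpha,\beta,\gamma,r_1,r_2$ (the support condition $nr_2^2\alpha^3\beta^2\gamma^2\ll K^{2+\varepsilon}$ making these parameters appear with negative exponents, so that the outer sums in \eqref{3eqb} converge). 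Together with the non-resonant estimate this yields \eqref{2eq3.15}.

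The hard part will be the resonant range: one has to extract enough from the resonance to overcome the square-root-size loss from applying Weil's bound to \emph{two} independent Kloosterman moduli $c_1$ and $c_2$, and the estimate closes only if the confinement of $m$ is kept scale-dependent throughout the dyadic bookkeeping. Verifying that the exponents then combine to a genuine power saving, and that all bounds are uniform in $\alpha,\beta,\gamma,r_1,r_2$ (with $K^{-\varepsilon}<r_1/r_2<K^{\varepsilon}$), is the delicate point; this is also where the admissible range of $H$ makes itself felt (cf.\ Remark~\ref{3remark2}).
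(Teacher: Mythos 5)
Your proposal follows essentially the same route as the paper: the paper also splits $E_2$ into a resonant piece $S_1$ (where $m$ lies within $\asymp m_0\, n r_1/(c_2K^{199/100})$ of the critical value $m_0$ defined by $4\pi\sqrt{nm_0\alpha\beta^2}/c_1 = 16\pi nr_1/c_2$, treated by the trivial $k$-sum bound, Weil's bound, and the shortness of the $m$-interval) and a non-resonant piece $S_2$ (killed by the rapid decay \eqref{2eq1.86}, via the citation of \cite[Lemma 3.4]{6}). The only differences are cosmetic: the paper's bookkeeping yields $S_1\ll K^{-6/25}$ rather than your $K^{-1/12+\varepsilon}$, and either saving suffices for \eqref{2eq3.15}.
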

\begin{proof}
By definition of $E_2$ and (\ref{2eq1.32}), we have $n< \frac{K^{2+\varepsilon}}{r_1^2}$, $m<K^{1+\varepsilon}$, $c_2>\frac{\sqrt{n}r_1c_1}{K^{\frac{1}{2}-\varepsilon}\sqrt{\alpha}\beta}$, $c_1 < K^{\frac{1}{6}+\varepsilon}$.
\par Let
\begin{equation}
\frac{4\pi \sqrt{nm_0\alpha\beta^2}}{c_1} = \frac{16\pi nr_1}{c_2}. \label{eq3.18}
\end{equation}
Denote $S_1$ the contribution to $E_2$ of the term with
\begin{equation}
|m-m_0| < m_0 \cdot \min \left\{\frac{1}{2},\frac{nr_1}{c_2K^{199/100}}\right\},\label{eq3.19}
\end{equation}
so that
\begin{equation}
\frac{1}{\sqrt{m}}\ll \frac{1}{\sqrt{m_0}} = \frac{c_2\sqrt{\alpha}\beta}{4c_1r_1\sqrt{n}}.\label{eq3.20}
\end{equation}
The rest of $E_2$ denote by $S_2$.
\par Then by (\ref{2eq1.16}), (\ref{eq3.20}), (\ref{2eq1.48}) and relation (\ref{eq3.19}), we have
\begin{equation}
\begin{split}
S_1\ll & \frac{1}{H}\sum_{n< \frac{K^{2+\varepsilon}}{r_1^2}}\sum_{m}\sum_{c_2>\frac{\sqrt{n}r_1c_1}{K^{\frac{1}{2}-\varepsilon}\sqrt{\alpha}\beta}}\sum_{c_1 < K^{\frac{1}{6}+\varepsilon}}\frac{1}{(nmc_1c_2)^{1/2}}\\
&\left|\sum_{k\equiv 0\,mod\,2}i^kw\left(\frac{k-K}{H}\right)J_{k-1}\left(\frac{4\pi nr_1}{c_2}\right)J_{2k-1}\left(\frac{4\pi\sqrt{nm\alpha\beta^2}}{c_1}\right)\right|\\
&\ll \sum_{n< \frac{K^{2+\varepsilon}}{r_1^2}}\sum_{c_2>\frac{\sqrt{n}r_1c_1}{K^{\frac{1}{2}-\varepsilon}\sqrt{\alpha}\beta}}\sum_{c_1 < K^{\frac{1}{6}+\varepsilon}} \frac{n^2r_1^3c_1^2}{c_2^3K^{199/100}}\frac{c_2}{nr_1}\frac{1}{(nc_1c_2)^{1/2}}\frac{c_2}{c_1\sqrt{n}r_1}\\
&\ll \sum_{n< \frac{K^{2+\varepsilon}}{r_1^2}}\sum_{c_1 < K^{\frac{1}{6}+\varepsilon}} \frac{K^{1/4}}{n^{1/4}}\frac{r_1^{1/2}}{K^{199/100}}\ll K^{-\frac{6}{25}}.\label{eq3.21}
\end{split}
\end{equation}
By \cite[Lemma 3.4]{6}, we see that $S_2$ is bounded by any negative power of $K$.
\end{proof}
\begin{remark}\label{3remark2}
Lemma \ref{2thm3.4} is valid for all $H$ with $0<H\le K$.
\end{remark}
\par It remains to show that $E_3$ is less than a negative power of $K$.
\begin{lemma}\label{2thm3.5}
For part $E_3$, we have
\begin{equation}
E_3\ll K^{-\delta}.\label{2eq3.26}
\end{equation}
\end{lemma}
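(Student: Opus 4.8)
The plan is to split $E_3$ according to the size of $x := nr_1/c_2$ and, in each range, to evaluate the inner sum over $k$ by Lemma \ref{2thm1.8}. Write $E_3 = E_3' + E_3''$, where $E_3'$ consists of those terms of $E_3$ with $nr_1/c_2 > K^{4/3-\varepsilon}$ and $E_3''$ of those with $nr_1/c_2 \le K^{4/3-\varepsilon}$; by the definition of $E_3$ one has $nr_1/c_2 < K^{2-\varepsilon}$ throughout, so with $y := \sqrt{nm\alpha\beta^2}/c_1$ the pair $(x,y)$ always lies in the set (\ref{2eq1.55}). To bring the $k$-sum into the shape required by Lemma \ref{2thm1.8}, I would first express $V_{k,1}(m)$ and $V_{k,2}(nr_2^2\alpha^3\beta^2\gamma^2)$ through the Mellin integrals (\ref{2eq1.26}), as in the derivation of (\ref{2eq2.3}), and absorb the factor $(k-1)/K$; by Stirling's formula the $k$-dependence then becomes a smooth weight, supported near $k=K$ on scale $H$, whose $j$-th derivative is $\ll_j (K/H)^j$. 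Applying Lemma \ref{2thm1.8} to each such weight and integrating back in the Mellin variables produces the main term together with an error $O_A(K^{-A})$; summed over the $O(K^{O(1)})$ terms present, this error is $O(K^{-\delta})$ once $A$ is taken large, the $(K/H)^{O(1)}$ inflation of the derivatives of the weight (hence of the functions $H^{\pm}$) being harmless because $H \gg K^{3/4}$.

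For $E_3'$ nothing further is needed. There $K^{4/3-\varepsilon} < x < K^{2-\varepsilon}$ and, by the definition of $E_3$, either $y < xK^{-\varepsilon}$ or $y > xK^{\varepsilon}$. If $y < xK^{-\varepsilon}$ then $y/4x<1$, so by (\ref{2eq1.56})--(\ref{2eq1.59}) the main term of Lemma \ref{2thm1.8} is supported on $1 - y/4x \asymp K^2/y^2$ with $y \gg K$; but $y < xK^{-\varepsilon}$ gives $1 - y/4x > 1/2$, hence $K^2/y^2 \gg 1$ and $y \ll K$, a contradiction. If $y > xK^{\varepsilon}$ then $y/4x > K^{\varepsilon}/4 > 1$, contradicting the factor $\delta_{y/4x<1}$ in (\ref{2eq1.58}). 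Hence the main term vanishes identically on $E_3'$, and $E_3' \ll K^{-\delta}$.

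The crux is $E_3''$. Here Lemma \ref{2thm1.8} yields a main term supported on $K \ll x, y \ll K^{4/3}$ with $y \asymp x$ and $1 - y/4x \asymp K^2/y^2$, carrying the oscillatory factor $e\left(\pm\left(\frac{m\alpha\beta^2 c_2}{4r_1c_1^2} + \frac{2nr_1}{c_2}\right)\right)$ (from $y^2/4x + 2x$) together with the Kloosterman sums $S(n^2,r_1^2;c_2)$ and $S(n\alpha\beta^2,m;c_1)$. The first step is to exploit the oscillation in $m$: open $S(n\alpha\beta^2,m;c_1) = {\sum_{b\,mod\,c_1}}^{*} e\left(\frac{n\alpha\beta^2 b + m\bar b}{c_1}\right)$ and apply Poisson summation to the sum over $m$. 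The $m$-amplitude --- namely $m^{-1/2}$ times the $H^{\pm}$-factor --- is smooth on the scale $\asymp c_1^2 K^2/(n\alpha\beta^2)$ of its interval of support $m \asymp m_0 := 16c_1^2 r_1^2 n/(c_2^2\alpha\beta^2)$, so Poisson summation (with the rational frequency $\bar b/c_1 \pm \alpha\beta^2 c_2/(4r_1c_1^2)$) shows the $m$-sum to be $O_A(K^{-A})$ unless that frequency lies within $O(K^{\varepsilon}n\alpha\beta^2/(c_1^2K^2))$ of an integer; this condition pins $b$ modulo $c_1$ --- equivalently, links $c_2$ to $n, c_1, r_1, \alpha, \beta$ --- and bounds the surviving $m$-sum by $O(K^{\varepsilon} m_0^{1/2} K^2/y^2)$. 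One then inserts Weil's bound (\ref{2eq1.16}) for $S(n^2,r_1^2;c_2)$ and sums over the admissible $n \ll K^{2+\varepsilon}$, $c_1 \ll K^{1/2+\varepsilon}$ and $c_2$ (constrained by $K \ll x \le K^{4/3-\varepsilon}$ and by the localization (\ref{2eq1.59})), together with the short dual variable and the pinned residues.

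The hard part is to extract enough cancellation at this last stage: one Poisson summation in $m$ followed by completely trivial estimates does not suffice, the obstruction coming from the sub-range in which the interval of support $(m_0(1 - K^2/y^2), m_0)$ in $m$ is long, so that many pairs $(m, c_2)$ are compatible with the narrow localization (\ref{2eq1.59}). To overcome this I would, after Poisson in $m$, also exploit the oscillation in $n$: the remaining $n$-phase is $e(\pm 2nr_1/c_2)$ times $e(n^2 d/c_2)$ coming from $S(n^2,r_1^2;c_2) = {\sum_{d\,mod\,c_2}}^{*} e\left(\frac{n^2 d + r_1^2\bar d}{c_2}\right)$, so a further Poisson summation in $n$, with the resulting quadratic Gauss sums estimated in the usual way, should give the bound $E_3'' \ll K^{3/4+\varepsilon}/H$, uniformly in $\alpha, \beta, \gamma, r_1, r_2$, once the dependence on these parameters is tracked carefully throughout. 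Since $H = K^{3/4+c}$, this is $O(K^{-\delta})$ for a suitable $0 < \delta \le c$, which is the restriction on $H$ recorded in the Remark below.
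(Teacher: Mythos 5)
Your overall strategy---apply Lemma \ref{2thm1.8} to the $k$-sum after unfolding the $V$-weights, then exploit the resulting oscillation by Poisson summation in $m$ and in $n$---is the route of \cite[Lemma 3.5]{6}, which is essentially all the paper does here: its proof replaces $w((k-K)/H)$ by a weight $w'(k/K)$ supported on $(1,(K+H)/K)$ and quotes Khan's bound with an extra loss of $K/H$, giving $E_3\ll K^{-1/4+\varepsilon}\cdot K/H\ll K^{-\delta}$. So in spirit you are reconstructing the right argument, and your target bound $K^{3/4+\varepsilon}/H$ matches the paper's numerology.

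There are, however, two genuine problems. First, your disposal of $E_3'$ is wrong. In the range $y<xK^{-\varepsilon}$ you deduce $1-y/4x>1/2$, hence $K^2/y^2\gg 1$, hence $y\ll K$, and declare a contradiction with $y\gg K$; but $y\ll K$ together with $y\gg K$ just says $y\asymp K$, which is perfectly consistent. Indeed, since $1-(y/4x)^2\asymp 1$ there, the support condition (\ref{2eq1.56}) on $\xi_1=y\sqrt{1-(y/4x)^2}/(2K)$ forces exactly $y\asymp K$, i.e.\ $nm\alpha\beta^2\asymp c_1^2K^2$: a nonempty set on which the main term $x^{-1/2}e(\pm(y^2/4x+2x))H^{\pm}$ must still be summed against both Kloosterman sums over $n,m,c_1,c_2$. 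This piece is not killed by support considerations and needs the same Poisson-summation treatment as $E_3''$, so ``nothing further is needed'' is false. Second, for $E_3''$ the decisive estimate is asserted rather than proved: the sentence stating that a further Poisson summation in $n$ ``should give'' $E_3''\ll K^{3/4+\varepsilon}/H$ is precisely the content of the lemma, and the proposal never actually extracts the saving (the count of pinned residues $b\bmod c_1$, the Gauss-sum evaluation, and the $c_2$-sum constrained by (\ref{2eq1.59}) are all left open). Relatedly, Lemma \ref{2thm1.8} is stated for a weight with bounded derivatives on $(1,2)$; a weight localized at scale $H$ has derivatives inflated by $(K/H)^j$, and your claim that this inflation is ``harmless'' is exactly the point that the paper's Remark \ref{3remark1} identifies as the source of the restriction $H\ge K^{3/4}$, so it cannot simply be waved away.
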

\begin{proof}
We use another smooth, non-negative function $w '$ supported on $(1,\frac{K+H}{K})$ here, and we have
\begin{equation}
\begin{split}
E_3 = & \frac{1}{\pi H}\sum_{k\equiv 0\,mod\,2}i^kw'\left(\frac{k}{K}\right)\frac{k-1}{K}\sum_{n,m,c_1,c_2}\frac{1}{(nm)^{1/2}}\frac{S(n^2,r_1^2;c_2)}{c_2}\\
&\times \frac{S(n\alpha\beta^2,m;c_1)}{c_1}J_{k-1}\left(\frac{4\pi nr_1}{c_2}\right)J_{2k-1}\left(\frac{4\pi \sqrt{nm\alpha\beta^2}}{c_1}\right)\\
&\times V_{k,1}(m)V_{k,2}(nr_2^2\alpha^3\beta^2\gamma^2),\label{6eq3.6}
\end{split}
\end{equation}
where the terms in the sums of $n,m,c_1,c_2$ need to satisfy
\begin{equation}
\frac{nr_1}{c_2} < K^{4/3-\varepsilon}\quad or\quad \frac{nr_1}{c_2} > K^{2-\varepsilon}
\end{equation}
and
\begin{equation}
\frac{\sqrt{nm\alpha\beta^2}}{c_1} < \frac{nr_1K^{\varepsilon}}{c_2} \quad or\quad \frac{\sqrt{nm\alpha\beta^2}}{c_1} > \frac{nr_1}{c_2K^{\varepsilon}}.
\end{equation}
By \cite[Lemma 3.5]{6}, we have
\begin{equation}
E_3\ll K^{-1/4+\varepsilon}\cdot\frac{K}{H}\ll K^{-\delta}.\label{eq3.23}
\end{equation}
\end{proof}
\begin{remark}\label{3remark1}
Lemma \ref{2thm3.5} are valid for $H = K^{\frac{3}{4}+c}$, $0<c\le 1/4$.
Because we use a smooth function different from \cite{6}, after applying Poisson summation formula, the properties of the function on the index are very poor, so that it is difficult to get power saving by using stationary phase method or other methods.
That is how $H$ is restricted in Theorem \ref{thm1.3}.
\end{remark}

\section*{\bf Acknowledgements}
I am grateful to my supervisor, Prof. Yongxiao Lin, for providing the topic, careful guidance and valuable suggestions. I would like to thank Prof. Bingrong Huang, for providing the idea of applying Poisson summation formula for the sum of Kloosterman sums (See Remark \ref{3remark4}). Thanks also to Prof. Hongbo Yin for providing much encouragement and guidance.

\hspace*{\fill}
\begin{flushleft}
Jinghai Liu \\
Data Science Institute, Shandong University, Jinan 250100, Shandong, China\\
Email address: \textcolor{blue}{ljhailhy@outlook.com}
\end{flushleft}

\newpage

\end{document}